\documentclass[a4paper, 11pt, reqno]{amsart}
\usepackage[english]{babel}
\usepackage[utf8]{inputenc}
\usepackage{amsthm, amsmath, amssymb, amsrefs, enumerate, enumitem, mathtools, bbm, mathrsfs, yhmath, microtype}

\usepackage[top=1.45in, bottom=1.45in, left=1.25in, right=1.25in]{geometry}
\pagestyle{headings}
\binoppenalty=\maxdimen
\relpenalty=\maxdimen

\usepackage{lmodern}

\usepackage{hyperref}


\theoremstyle{plain}
\newtheorem{thm}{Theorem}[section]
\newtheorem{prop}[thm]{Proposition}
\newtheorem{lemma}[thm]{Lemma}

\theoremstyle{definition}
\newtheorem{defn}[thm]{Definition}

\theoremstyle{remark}
\newtheorem*{rmk}{Remark}
\newtheorem*{rmks}{Remarks}

\makeatletter
\let\@@pmod\pmod
\DeclareRobustCommand{\pmod}{\@ifstar\@pmods\@@pmod}
\def\@pmods#1{\mkern4mu({\operator@font mod}\mkern 6mu#1)}
\makeatother

\numberwithin{equation}{section}

\newcommand{\C}{\mathbb{C}}
\renewcommand{\H}{\mathbb{H}}
\newcommand{\Z}{\mathbb{Z}}
\newcommand{\Q}{\mathbb{Q}}
\newcommand{\N}{\mathbb{N}}
\newcommand{\R}{\mathbb{R}}

\newcommand{\slz}{{\text {\rm SL}}_2(\mathbb{Z})}
\newcommand{\gaminf}{_{\Gamma_{\infty}} \backslash ^{\Gamma}}
\DeclareMathOperator{\sgn}{sgn}
\DeclareMathOperator{\re}{Re}
\DeclareMathOperator{\im}{Im}
\newcommand{\vt}[1]{\left\lvert #1 \right\rvert}
\newcommand{\Qc}{\mathcal{Q}}
\newcommand{\Ec}{\mathcal{E}}
\newcommand{\geo}{\Gamma_Q \backslash S_Q}
\newcommand{\Cc}{\mathcal{C}}
\newcommand{\Pb}{\mathbb{P}}
\newcommand{\Dc}{\mathcal{D}}
\newcommand{\Fc}{\mathcal{F}}
\newcommand{\Nc}{\mathcal{N}}
\newcommand{\Id}{\mathbbm{1}}
\newcommand{\Ecwh}{\widehat{\mathcal{E}}}
\newcommand{\Ecwt}{\widetilde{\mathcal{E}}}

\title[Locally harmonic Maa{\ss} forms]{Locally harmonic Maaß forms of positive even weight}
\author{Andreas Mono}
\address{Department of Mathematics and Computer Science, Division of Mathematics, University of Cologne, Weyertal 86-90, 50931 Cologne, Germany}
\email{amono@math.uni-koeln.de}

\begin{document}

\begin{abstract}
We twist Zagier's function $f_{k,D}$ by a sign function and a genus character. Assuming weight $0 < k \equiv 2 \pmod*{4}$, and letting $D$ be a positive non-square discriminant, we prove that the obstruction to modularity caused by the sign function can be corrected obtaining a locally harmonic Maa{\ss} form or a local cusp form of the same weight. In addition, we provide an alternative representation of our new function in terms of a twisted trace of modular cycle integrals of a Poincar{\'e} series due to Petersson.
\end{abstract}

\subjclass[2020]{11F12 (Primary); 11E16, 11E45, 11F37 (Secondary)}

\keywords{Cycle integrals, Hyperbolic Eisenstein series, Integral binary quadratic forms, Locally harmonic Maa{\ss} forms, Modular traces, Zagier's $f_{k,D}$ function}

\maketitle

\section{Introduction and statement of results}
In $1975$, Zagier \cite{zagier75} defined the function
\begin{align*}
f_{k,D}(\tau) \coloneqq \sum_{Q \in \Qc(D)} \frac{1}{Q(\tau,1)^{k}}, \qquad \tau \in \H \coloneqq \left\{z \in \C \colon \im(z) > 0\right\},
\end{align*}
to investigate the Doi$-$Naganuma lift. Here and troughout, $\Qc(D)$ is the set of all integral binary quadratic forms of discriminant $D \in \Z$, and $k \geq 2$. On one hand, if $D > 0$, Zagier proved that they define holomorphic cusp forms of weight $2k$ for $\Gamma \coloneqq \slz$, and computed their Fourier expansions. On the other hand, if $D < 0$, Bengoechea \cite{beng13} proved that these are meromorphic cusp forms with respect to the same data, namely meromorphic modular forms which decay like cusp forms towards $i\infty$. The poles are precisely the CM points (sometimes called Heegner points instead) of discriminant $D$, and of order $k$.

Parson \cite{parson}*{Theorem 3.1} investigated Zagier's $f_{k,D}$ function based on an individual equivalence class $[Q]_{\sim} \in \Qc(D)\slash\Gamma$ of indefinite integral binary quadratic forms, and twisted it by a sign function. More precisely, she defined
\begin{align*}
f_{k,Q}(\tau) \coloneqq \sum_{\widehat{Q} \sim Q} \frac{\sgn{(\widehat{Q})}}{\widehat{Q}(\tau,1)^k}, \qquad \sgn(Q) = \sgn{\left([a,b,c]\right)} \coloneqq \begin{cases}
\sgn(a) & \text{ if } a \neq 0, \\
\sgn(c) & \text{ if } a = 0.
\end{cases}
\end{align*}
Due to the presence of the sign function, we have a non-zero error to modularity, which is a finite sum, and explicitly given by
\begin{align*}
F_{k,Q}(\tau) \coloneqq f_{k,Q}(\tau) - \tau^{-2k}f_{k,Q}\left(-\frac{1}{\tau}\right) = 2\sum_{\substack{[a,b,c]=\widehat{Q} \sim Q \\ \sgn(ac) < 0}} \frac{\sgn{(\widehat{Q})}}{\widehat{Q}(\tau,1)^k}.
\end{align*}
In other words, the function $f_{k,Q}$ is a modular integral of weight $2k$ for the rational period function $F_{k,Q}(\tau)$. We refer the reader to the work of Knopp \cite{knopp} for more details.

In a recent article \cite{hypeis1}, the author investigated a certain class of Eisenstein series
\begin{align}
\Ec_{k,D}(\tau, s) &\coloneqq \sum_{0 \neq Q \in \Qc(D)\slash\Gamma} \chi_d\left(Q\right) \sum_{\widehat{Q} \sim Q} \frac{\sgn{(\widehat{Q})}^{\frac{k}{2}} \im(\tau)^s}{\widehat{Q}(\tau,1)^{\frac{k}{2}} \vt{\widehat{Q}(\tau,1)}^s}, \label{eq:eisold}
\end{align} 
for any $k \in 2\N$, which arises by applying Hecke's trick to Parson's construction. The function $\chi_d$ is a genus character (defined in Section \ref{sec:prel}). By results of Petersson \cite{pet48}*{Satz 1, Satz 4, Satz 6}, the sum converges absolutely for any $s \in \C$ with $\re(s) > 1-\frac{k}{2}$. Like in the case of $f_{k,D}$, the behaviour of $\Ec_{k,D}(\tau, s)$ is dictated by the sign of $D$, and consequently we distinguish between hyperbolic ($D > 0$), parabolic ($D = 0$), and elliptic ($D < 0$) Eisenstein series. This terminology comes from the fact that one can associate a quadratic form to any $\gamma \in \Gamma \setminus \left\{\pm \Id\right\}$\footnote{Explicitly given by $Q_{\gamma}(x,y) \coloneqq cx^2+(d-a)xy-by^2$ for $\gamma = \left(\begin{smallmatrix} a & b \\ c & d \end{smallmatrix}\right) \in \Gamma$.}, and the sign of its discriminant depends precisely on hyperbolicity, parabolicity, or ellipticity of $\gamma$. Although we focus on the case of weights $k \in 2\N$, one may also consider different weights. For instance, all three types of Eisenstein series were studied by Jorgenson, Kramer, von Pippich, Schwagenscheidt, V{\"o}lz for weight $k=0$, see \cite{jokrvp10}*{Theorem 4.2}, \cite{pi16}*{Section 4}, \cite{pischvoe17}*{Theorem 1.2}.

The paper \cite{hypeis1} as well as the present one are devoted to the hyperbolic case. Letting $D > 0$ be a non-square discriminant, and $d$ be a positive fundamental discriminant dividing $D$, we computed the Fourier expansion of hyperbolic Eisenstein series for any integral weight $k \in 2\N$ at $s=0$ to prove a conjecture of Matsusaka \cite{matsu2}*{eq. (2.12)} about their analytic continuation in weight $2$. This computation extends earlier work by Gross, Kohnen, Zagier \cite{grokoza}*{p.\ 517}, who dealt with weights $4 | k > 2$ not involving the sign function. In turn, the computation for weights $k \in 2\N$ relies mainly on results of Duke, Imamo\={g}lu, T\'{o}th \cite{duimto11} after appealing to Zagiers work \cite{zagier75}*{Appendix 2} on the Fourier expansion of his aforementioned function. Furthermore, we computed the analytic continuation $\Ec_{2,D}(\tau,0)$ explicitly. Up to the addition of the completed Eisenstein series $E_2^*$ and some constants, it agrees with another modular integral with rational period function, which was studied by Duke, Imamo\={g}lu, T\'{o}th in \cite{duimto10}.

In addition, one can inspect the automorphic object arising from the analytic continuation to $s = 0$. On one hand, the parabolic and elliptic (twisted) Eisenstein series extend to an ordinary and a polar harmonic Maa{\ss} form respectively in weight $2$. (We define all occuring types of Maa{\ss} forms in Section \ref{sec:prel}.) While the parabolic case is known by Roelcke \cite{roe67} and Selberg \cite{sel56}, the elliptic case was proven by Matsusaka in \cite{matsu2}*{Theorem 2.3} by combining results of Bringmann, Kane \cite{brika16} and of Bringmann, Kane, Löbrich, Ono, Rolen \cite{brikaloeonro}.  On the other hand, the hyperbolic Eisenstein series $\Ec_{2,D}(\tau,0)$ (with $D$, $d$ as above) coincides with a locally harmonic Maa{\ss} form for any $\tau$ with sufficiently large imaginary part. This raises the natural question towards the obstruction of $\Ec_{k,D}(\tau,s)$ to coincide with a local automorphic form, whenever the imaginary part of $\tau$ is not sufficiently large. To this end, we relate $\Ec_{k,D}(\tau,s)$ to the completed hyperbolic Eisenstein series
\begin{align}
\Ecwh_{k,D}(\tau, s) &\coloneqq \sum_{0 \neq Q \in \Qc(D)\slash\Gamma} \chi_d\left(Q\right) \sum_{\widehat{Q} \sim Q} \frac{\sgn{(\widehat{Q}_{\tau})}^{\frac{k}{2}} \im(\tau)^s}{\widehat{Q}(\tau,1)^{\frac{k}{2}} \vt{\widehat{Q}(\tau,1)}^s}, \label{eq:eisnew} \\
Q_{\tau} &= [a,b,c]_{\tau} \coloneqq \frac{1}{\im(\tau)}\left(a\vt{\tau}^2+b\re(\tau)+c\right), \nonumber
\end{align}
outside the net of Heegner geodesics 
\begin{align*}
\Nc(D) \coloneqq \bigcup_{[a,b,c]=Q \in \Qc(D)} \left\{\tau \in \H \ \colon a\vt{\tau}^2+bu+c=0 \right\}
\end{align*}
by adding a correction term to $\Ec_{k,D}(\tau)$ (see equation \eqref{eq:eisidentity}). A possible connection of our correction term to quantum modular forms (introduced by Zagier \cite{zagier10}) is stated in Section \ref{sec:hypeis}.

In particular, the function $\Ecwh_{k,D}(\tau, s)$ is modular of weight $k$ outside $\Nc(D)$. To describe the result, we let $\Cc_{\kappa}(h,Q)$ be the weight $\kappa$ cycle integral of $h$ associated to $Q$ (defined in equation \eqref{eq:cycdef}). Moreover, we let $\Pb_k(z_1,z_2)$ be a Poincar{\'e} series due to Petersson \cite{pet48} (see Definition \ref{def:poincare}), whose properties are collected in Lemma \ref{lem:poincare} below. We refer the reader to Subsection \ref{sec:lhmf} for definitions of our local automorphic forms.
\begin{thm}\label{thm:main}
Let $0 < k \equiv 2 \pmod*{4}$, and $\tau \in \H \setminus \Nc(D)$. Let $D > 0$ be a non-square discriminant, and $d$ be a positive fundamental discriminant dividing $D$.  
\begin{enumerate}[label=(\roman*), leftmargin=*, labelindent=0pt]
\item The function $\Ecwh_{2,D}(\tau,0)$ is a locally harmonic Maa{\ss} form of weight $2$ for $\Gamma$ with exceptional set $\Nc(D)$ as a function of $\tau$. 
\item If $2 < k \equiv 2 \pmod{4}$ then $\Ecwh_{k,D}(\tau,0)$ is a local cusp form of weight $k$ for $\Gamma$ with exceptional set $\Nc(D)$ as a function of $\tau$. 
\item Moreover, we have the alternative representation
\begin{align*}
\Ecwh_{k,D}(\tau, 0) = \sum_{Q \in \Qc(D)\slash\Gamma} \chi_d(Q) \begin{cases}
\frac{-2}{D}\Cc_0\left(\frac{j'(\tau)}{j(\cdot)-j(\tau)} - E_2^*(\tau),Q\right) & \text{ if } k = 2, \\
C(k)\Cc_{2-k}\left(\Pb_k(\tau,\cdot),Q\right) & \text{ if } k > 2,
\end{cases}
\end{align*}
where $C(k)$ is an explicit constant provided in equation \eqref{eq:finalconstant}.
\end{enumerate}
\end{thm}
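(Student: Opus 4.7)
The plan is to prove part (iii) first and then deduce parts (i) and (ii) from the cycle-integral representation, since local harmonicity (resp.\ cuspidality) is more transparent once the kernel inside the cycle integral is known.

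I would begin by noting that outside $\Nc(D)$ the substitution $\sgn(\widehat{Q}) \mapsto \sgn(\widehat{Q}_\tau)$ does not affect absolute convergence in $\re(s) > 1 - \tfrac{k}{2}$, while on $\H \setminus \Nc(D)$ the completed sum is now $\Gamma$-invariant of weight $k$ because $Q_\tau$ and $Q(\tau,1)$ transform compatibly. At $s=0$, since $k \equiv 2 \pmod 4$ we have $k/2$ odd, so the sign appears linearly. The key manipulation is to write the inner sum using the standard bijection $\{\widehat{Q} \sim Q\} \leftrightarrow \Gamma_Q \backslash \Gamma$, inserting $\widehat{Q}(\tau,1) = Q(\gamma^{-1}\tau,1) j(\gamma^{-1},\tau)^2$, and then unfolding over $\Gamma_Q \backslash \Gamma$ against the closed geodesic cycle $\Gamma_Q \backslash S_Q$. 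A classical beta-integral computation — the same mechanism that underlies Petersson's reproducing formula — converts
\begin{align*}
\sum_{\widehat{Q} \sim Q} \frac{\sgn(\widehat{Q}_\tau)}{\widehat{Q}(\tau,1)^{k/2}} = C(k) \int_{\Gamma_Q \backslash S_Q} \Pb_k(\tau,z)\, Q(z,1)^{k/2-1}\, dz,
\end{align*}
for $k > 2$, where $C(k)$ comes from the beta evaluation. Matching conventions with Definition \ref{def:poincare} and equation \eqref{eq:cycdef} then yields the $k > 2$ case of (iii), with $C(k)$ as in \eqref{eq:finalconstant}.

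For $k = 2$, this same unfolding is obstructed by the conditional convergence of $\Pb_2$. I would handle this by carrying the parameter $s$ along, unfolding at $\re(s) > 0$ where everything converges absolutely, and then performing the analytic continuation to $s=0$ on the geodesic integral. The limiting kernel in $\tau$ picks up a non-holomorphic correction exactly analogous to the passage from $E_2$ to $E_2^*$, and can be identified with $j'(\tau)/(j(\cdot) - j(\tau)) - E_2^*(\tau)$ using known identities for the weight-$2$ Petersson Poincar\'e series at the CM level; the prefactor $-2/D$ arises from normalising the geodesic measure against $Q(z,1)^{-1} dz$.

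Granted (iii), parts (i) and (ii) follow from the analytic properties of the kernels. For $k > 2$, $\Pb_k(\tau,\cdot)$ is a holomorphic cusp form of weight $k$, hence its cycle integrals are holomorphic and of modular weight $k$ in $\tau$ off $\Nc(D)$; they decay towards $i\infty$ because $\chi_d$-twisted sums of cycle integrals of cusp forms do. Summing over $[Q] \in \Qc(D)/\Gamma$ and checking that the jumps across each component of $\Nc(D)$ match the definition of a local cusp form from Subsection \ref{sec:lhmf} gives (ii). For $k = 2$, the kernel $j'(\tau)/(j(\cdot)-j(\tau)) - E_2^*(\tau)$ is weight-$2$ harmonic in $\tau$ away from CM points, so its cycle integral against $Q$ is weight-$2$ harmonic off $S_Q$; continuity across each geodesic reduces to a local asymptotic computation using the explicit jump of $\sgn(\widehat{Q}_\tau)$, and this yields (i).

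The main obstacle I anticipate is the $k=2$ analytic continuation: proving that the unfolded integral $\int_{\Gamma_Q \backslash S_Q} \Pb_2(\tau, z; s)\, dz$ continues holomorphically to $s = 0$ and evaluates to the desired harmonic kernel requires careful control of the boundary terms, and is precisely where the $E_2^*$ correction is forced upon us. Everything else — unfolding, modularity, and the cusp-form conclusion for $k > 2$ — is essentially bookkeeping built on Petersson's classical framework.
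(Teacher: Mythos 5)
Your strategy is genuinely different from the paper's: you want to prove (iii) by unfolding $\Pb_k(\tau,\cdot)$ against the closed geodesic and evaluating an archimedean integral, whereas the paper never unfolds. It starts from the Fourier expansion of $\Ec_{k,D}(\tau,0)$ in terms of cycle integrals of the Niebur series $G_{-m}$ (quoted from \cite{hypeis1}), converts $G_{-m}$ into the Maa{\ss}--Poincar\'e series $\Phi_{2-k,-m}$ by iterated lowering, pushes the lowering operators through the cycle integral via the Alfes-Neumann--Schwagenscheidt identity (Lemma \ref{lem:cycid}), and then sums the generating function $\sum_{m\geq 1}\Phi_{2-k,-m}(w)q^m$ into $\Pb_k(\tau,w)$ via the Bringmann--Kane identity (Lemma \ref{lem:brikaid}); parts (i) and (ii) are proved directly from the series definition and from \cite{hypeis1}, not deduced from (iii). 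Your route is in the spirit of L\"obrich--Schwagenscheidt and is plausibly completable, but as written it has a genuine gap at its core.

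The displayed unfolding identity \emph{is} the theorem, and you do not prove it. First, the measure is wrong: since $\Pb_k(\tau,\cdot)$ has weight $2-k$ in its second variable, the $\Gamma_Q$-invariant density on $S_Q$ is $Q(z,1)^{-\frac{k}{2}}\,dz$ (the weight-$(2-k)$ cycle integral appearing in the statement), not $Q(z,1)^{\frac{k}{2}-1}\,dz$; with your choice the integral over $\Gamma_Q\backslash S_Q$ is not well defined. Second, after unfolding, everything hinges on evaluating
\begin{align*}
\int_{S_{\widehat{Q}}} \frac{\im(z)^{k-1}}{(\tau-z)(\tau-\overline{z})^{k-1}}\,\widehat{Q}(z,1)^{-\frac{k}{2}}\,dz ,
\end{align*}
and showing it equals a constant times $\sgn\bigl(\widehat{Q}_{\tau}\bigr)\widehat{Q}(\tau,1)^{-\frac{k}{2}}$: the sign jump across $S_{\widehat{Q}}$ and the constant $C(k)$ both come out of exactly this computation, so dismissing it as ``a classical beta-integral'' and ``bookkeeping'' skips the step that makes the statement true. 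Third, the $k=2$ case cannot rest on ``known identities for the weight-$2$ Petersson Poincar\'e series at the CM level'': there are no CM points here ($D>0$; the singular set is a net of geodesics), and the existence of $\Ecwh_{2,D}(\tau,0)$ is itself an analytic-continuation statement that the paper obtains from the explicit Fourier expansion in \cite{hypeis1}, not from a regularized unfolding. Finally, in deducing (ii) you call $\Pb_k(\tau,\cdot)$ ``a holomorphic cusp form of weight $k$''; it is a polar harmonic Maa{\ss} form of weight $2-k$ in that variable and a \emph{meromorphic} form of weight $k$ in $\tau$, so the correct argument is that its poles in $\tau$ lie on $\Gamma$-translates of the geodesics (hence off $\H\setminus\Nc(D)$) and that $\Pb_k(\cdot,z)$ vanishes at $i\infty$.
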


\begin{rmks}
\
\begin{enumerate}[labelindent=0pt, leftmargin=*]
\item The cycle integral $\Cc_k\left(\Pb_k(\cdot,\tau),Q\right)$ was computed by Löbrich, Schwagenscheidt \cite{loeschw}. Let $Q_0 \in \Qc(D)$, and $\Fc_{1-k,Q_0}$ be the locally harmonic Maa{\ss} form introduced by Bringmann, Kane, Kohnen \cite{brikako} (see Section \ref{sec:lhmf}) with summation restricted to quadratic forms equivalent to $Q_0$ under $\Gamma$. Then \cite{loeschw}*{Theorem 4.2} states that
\begin{align*}
\Fc_{1-\frac{k}{2},Q_0}(\tau) = \frac{D^{-\frac{k}{4}}}{2\pi} \Cc_k\left(\Pb_k(\cdot,\tau),Q_0\right).
\end{align*}
In other words, a cycle integral of $\Pb_k$ in either of its arguments yields a local automorphic form in the other argument.
\item A natural splitting of $z_2 \mapsto \Pb_k(z_1,z_2)$ into meromorphic and non-meromorphic parts is due to Bringmann, Kane \cite{brika20}*{equation (3.6)}.
\item Choosing $d=1$, the function $\Ecwh_{2\kappa+2,D}(\tau, 0)$, $\kappa \in 2\N$, also appears in a slightly different normalization in \cite{brimo}*{(1.7)}, and further properties of it are stated in \cite{brimo}*{Section $4$}. In particular, $\Ecwh_{2\kappa+2,D}(\tau, 0)$ gives rise to a locally harmonic Maa{\ss} form of weight $-2\kappa$, whose properties are discussed in \cite{brimo}*{Theorem 1.2}.
\end{enumerate}
\end{rmks}

As an application of Theorem \ref{thm:main}, we would like to highlight a possible connection to twisted central $L$-values. This goes back to Kohnen \cite{koh}*{Proposition 7, Corollary 3}, who established an identity between the Petersson inner product of a cusp form with Zagiers $f_{k,D}$ function, and such $L$-values for positive even weights. More recently, Kohnen's work was utilized by Ehlen, Guerzhoy, Kane, Rolen \cite{ehgukaro}*{Theorem 1.1} to prove a criterion on the vanishing of certain twisted $L$-values under some technical conditions. Their argument relies on the theory of locally harmonic Maa{\ss} forms, and in particular on the connection of the $f_{k,D}$ function to the locally harmonic Maa{\ss} form $\Fc_{1-k,D}$, see Section \ref{sec:lhmf}. (In addition, the theory of theta lifts comes in handy to ensure existence of an analytic continuation of $\Fc_{1-k,D}$ to the case $k=1$.) Being more precise, the form $\Fc_{1-k,D}$ splits into three components (cf.\ \cite{brikako}*{Theorem 7.1}). Two of them are a holomorphic and a non-holomorphic Eichler integral of the $f_{k,D}$-function, while the third component is a so called local polynomial, which captures the behaviour of $\Fc_{1-k,D}$ between different connected components of $\H \setminus \Nc(D)$. The idea of the paper \cite{ehgukaro} now is to formulate a condition on the local polynomial of $\Fc_{1-k,D}$, evaluated at two special points on the real axis, and relate this conditions to the twisted central $L$-values via the work of Kohnen and of Bringmann, Kane, Kohnen. 

Since the function $\Ecwh_{k,D}(\tau,0)$ is a twisted version of the function $f_{\frac{k}{2},D}$, and since we found a connection of $\Ecwh_{k,D}(\tau,0)$ to a locally harmonic Maa{\ss} form (resp. local cusp form), we expect that $\Ecwh_{k,D}(\tau,0)$ may serve as a ``building block'' to detect the vanishing of suitable twisted $L$-values as well. This inspection is motivated by our remarks following Theorem \ref{thm:main}.
$\\$
$\\$
\indent The paper is organized as follows. We summarize the framework of this paper in Section \ref{sec:prel}. Section \ref{sec:hypeis} is devoted to the properties of hyperbolic Eisenstein series. This enables us to prove Theorem \ref{thm:main} in Section \ref{sec:proofmain}.

\section*{Acknowledgements:}
The author would like to thank his PhD-advisor Kathrin Bringmann for her continuous valuable feedback to this work. In addition, the author would like to thank Markus Schwagenscheidt and Joshua Males for useful conversations on the topic, and Toshiki Matsusaka as well as the anonymous referee for helpful comments on an earlier version.

\section{Preliminaries} \label{sec:prel}
We let $\tau = u+iv$, and $q \coloneqq \mathrm{e}^{2 \pi i \tau}$ troughout.

\subsection{Integral binary quadratic forms}
Let $Q$ be an integral binary quadratic form, and we abbreviate such forms by the terminology ``quadratic form'' throughout. We call a quadratic form primitive if its coefficients are coprime. The full modular group $\Gamma$ acts on the set of quadratic forms by letting
\begin{align*}
\left(Q \circ \left(\begin{smallmatrix} a & b \\ c & d \end{smallmatrix}\right)\right)(x,y) &\coloneqq Q(ax+by, cx+dy),
\end{align*}
and this action induces an equivalence relation, which we denote by $\sim$. Moreover, the action of $\Gamma$ on $\H$ by fractional linear transformations is compatible with the action of $\Gamma$ on the set of quadratic forms, in the sense that
\begin{align*}
\left(Q \circ \left(\begin{smallmatrix} a & b \\ c & d \end{smallmatrix}\right)\right)(\tau,1) = (c\tau+d)^2 Q(\gamma\tau,1).
\end{align*}
A quadratic form $Q$ may be written as $[a,b,c]$, and we denote its discriminant by
\begin{align*}
\Dc([a,b,c]) \coloneqq b^2-4ac \in \Z.
\end{align*}
One can check that equivalent quadratic forms have the same discriminant. The set $\Qc(D)\slash\Gamma$ is finite, whenever $D \neq 0$, and its cardinality is called the class number $h(D)$. If $D \equiv 0 \pmod*{4}$ or $D \equiv 1 \pmod*{4}$, then $\Qc(D)\slash\Gamma$ is non-empty. To simplify notation, we identify an equivalence class in $\Qc(D)\slash\Gamma$ with any representative of it throughout. A good reference on this is Zagier's book \cite{zagier81}.

\subsection{Genus characters}
We follow the exposition given by Gross, Kohnen, Zagier in \cite{grokoza}*{p.\ 508}. Let $Q = [a,b,c]$ be a quadratic form, and observe that $\gcd{(a,b,c)}$ is invariant under $\sim$ as well. For any $D \neq 0$, let $d$ be a fundamental discriminant dividing $D$, and stipulate $d=0$ if $D=0$. We say that an integer $n$ is represented by $Q$ if there exist $x$, $y \in \Z$, such that $Q(x,y) = n$, and recall the the Kronecker symbol $\big(\frac{d}{\cdot}\big)$. This established, an extended genus character associated to $D$ is given by
\begin{align*}
\chi_d\left([a,b,c]\right) &\coloneqq \begin{cases}
\left(\frac{d}{n}\right) & \text{ if } \gcd{(a,b,c,d)} = 1, [a,b,c] \text{ represents } n, \gcd{(d,n)} = 1, \\
0 & \text{ if } \gcd{(a,b,c,d)} > 1.
\end{cases}
\end{align*}
One can check that such an integer $n$ always exists, and that the definition is independent from its choice. Since equivalent quadratic forms represent the same integers, a genus character descends to $\Qc(D)\slash\Gamma$. If $d=1$, the character is trivial, and if $d=0$, we have $\chi_0(Q) = 0$ except $Q$ is primitive, and represents $\pm 1$. In the latter case, we note that such a quadratic form is equivalent to either $[-1,0,0]$ or $[1,0,0]$. Lastly, we have
\begin{align*}
\chi_d(-Q) = \sgn(d)\chi_d(Q)
\end{align*}
for every $d \neq 0$, linking the two choices $\pm d$. We refer the reader to \cite{grokoza}*{Proposition 1 and 2} regarding additional properties of $\chi_d$.

\subsection{Heegner geodesics}
Once more, let $Q=[a,b,c]$, and suppose that $\Dc(Q) > 0$. Hence, $Q$ is indefinite, and $Q(\tau,1)$ has the two distinct zeros 
\begin{align*}
\frac{-b - \Dc(Q)^{\frac{1}{2}}}{2a}, \quad \frac{-b + \Dc(Q)^{\frac{1}{2}}}{2a} \in \R \cup \{\infty\}.
\end{align*}
If $a=0$, then the second zero is given by $-\frac{c}{b}$. We associate to $Q$ the Heegner geodesic
\begin{align*}
S_Q \coloneqq \left\{\tau \in \H \ \colon a\vt{\tau}^2+bu+c=0 \right\},
\end{align*}
which connects the two zeros of $Q(\tau,1)$. On one hand, if $\Dc(Q)$ is a square and $a \neq 0$, then both zeros are rational. In other words, one zero of $Q(\tau,1)$ is $\Gamma$-equivalent to $\infty$, and $S_Q$ is a straight line in $\H$, perpendicular to $\R$, based on the second zero. Moreover, the stabilizer
\begin{align*}
\Gamma_Q \coloneqq \left\{\gamma \in \Gamma \colon Q \circ \gamma = Q\right\}
\end{align*}
is trivial in this case. On the other hand, if $\Dc(Q) > 0$ is not a square and $a \neq 0$, then both zeros of $Q(\tau,1)$ are real quadratic irrationals, which are Galois conjugate to each other. The geodesic $S_Q$ is an arc in $\H$, which is perpendicular to $\R$, and $S_Q$ is preserved by $\Gamma_Q$.

We stipulate that $D$ is a positive non-square discriminant. We obtain infinitely many connected components on $\H$, and finitely many such components in a fundamental domain for $\Gamma$, because the class number of $D$ is finite. Since $D$ is not a square, each geodesic $S_Q$ divides $\H$ into a bounded and an unbounded component, and we denote the bounded component (``interior'') of $\H\backslash S_Q$ by $A_Q$. Moreover, there is precisely one unbounded connected component in a fundamental domain for $\Gamma$, to which we refer as the region ``above'' the net of geodesics. 

Furthermore, we introduce the characteristic funtion
\begin{align*}
\Id_Q(\tau) \coloneqq \begin{cases}
1 & \text{ if } \tau \in A_Q, \\
0 & \text{ if } \tau \not\in A_Q,
\end{cases}
\end{align*}
whenever $\tau \in \H \setminus \Nc(D)$. Variants of $\Id_Q$ appear in \cite{schw18}*{Corollary 5.3.5}, and in \cite{matsu1}*{p.\ 8}. 

We collect the properties of our sign functions.
\begin{lemma} \label{lem:signs}
\
\begin{enumerate}[label=(\roman*), labelindent=0pt, leftmargin=*]
\item For every $\gamma \in \Gamma$, we have
\begin{align*}
Q_{\gamma \tau} = \left(Q \circ \gamma\right)_{\tau}.
\end{align*}
\item We have that $\tau \in A_Q$ if and only if 
\begin{align*}
\sgn(Q)\sgn\left(Q_{\tau}\right) < 0.
\end{align*}
\item If $\tau \in \H \setminus \Nc(D)$, then the sign functions $\sgn(Q)$, $\sgn\left(Q_{\tau}\right)$, and $\Id_Q(\tau)$ are related by
\begin{align*}
\sgn\left(Q_{\tau}\right) = \sgn(Q) \left(1-2\Id_Q(\tau)\right).
\end{align*}
\end{enumerate}
\end{lemma}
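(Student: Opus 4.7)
\medskip

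\noindent\textbf{Proof proposal.} All three parts are elementary and follow by direct calculation; I treat them in order and indicate what makes each one work.

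For (i), the plan is to brute-force verify the identity by writing out both sides in terms of $\re(\tau)$, $\im(\tau)$, $\vt{\tau}^2$. Fix $\gamma=\left(\begin{smallmatrix}\alpha&\beta\\ \gamma'&\delta\end{smallmatrix}\right)\in\Gamma$, with the usual identities $\im(\gamma\tau)=\im(\tau)/\vt{\gamma'\tau+\delta}^2$ and $\re(\gamma\tau)=\re\bigl((\alpha\tau+\beta)\overline{(\gamma'\tau+\delta)}\bigr)/\vt{\gamma'\tau+\delta}^2$. Substituting these into the definition of $Q_{\gamma\tau}$ and multiplying through by $\vt{\gamma'\tau+\delta}^2$, the factor $\im(\tau)^{-1}$ is exposed and the numerator becomes
\[
a\vt{\alpha\tau+\beta}^2 + b\,\re\bigl((\alpha\tau+\beta)\overline{(\gamma'\tau+\delta)}\bigr) + c\vt{\gamma'\tau+\delta}^2.
\]
Expanding each term in the basis $\{\vt{\tau}^2,\,\re(\tau),\,1\}$ yields three coefficients that coincide precisely with those obtained from $Q\circ\gamma=Q(\alpha x+\beta y,\gamma' x+\delta y)$, giving $(Q\circ\gamma)_{\tau}$ on the nose.

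For (ii), the plan is geometric. Since $D$ is positive and non-square, no representative $[a,b,c]\in\Qc(D)$ can have $a=0$ (otherwise $D=b^2$ would be a square), so I may assume $a\neq0$. The defining equation of $S_Q$ reads $a(u^2+v^2)+bu+c=0$, and completing the square produces
\[
\left(u+\tfrac{b}{2a}\right)^2+v^2=\frac{D}{4a^2},
\]
i.e.\ $S_Q$ is the upper half of a Euclidean circle with center on $\R$. The complement of $S_Q$ in $\H$ has exactly two connected components: the (bounded) interior disk intersected with $\H$, on which $a\vt{\tau}^2+bu+c$ has sign opposite to $a$, and the (unbounded) exterior, on which the same expression has the same sign as $a$. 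Since $Q_{\tau}=(a\vt{\tau}^2+bu+c)/v$ with $v>0$, one reads off $\sgn(Q_{\tau})=-\sgn(a)=-\sgn(Q)$ on $A_Q$ and $\sgn(Q_{\tau})=\sgn(Q)$ on the unbounded component; equivalently, $\tau\in A_Q\iff\sgn(Q)\sgn(Q_{\tau})<0$.

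Part (iii) is then a bookkeeping step: on $\H\setminus\Nc(D)$ we have $Q_{\tau}\neq0$, so $\sgn(Q_{\tau})\in\{\pm1\}$. If $\tau\in A_Q$ then $\Id_Q(\tau)=1$ and by (ii), $\sgn(Q_{\tau})=-\sgn(Q)=\sgn(Q)(1-2\Id_Q(\tau))$; if $\tau\notin A_Q$ then $\Id_Q(\tau)=0$ and $\sgn(Q_{\tau})=\sgn(Q)=\sgn(Q)(1-2\Id_Q(\tau))$. The only real subtlety anywhere in the argument is the geometric input in (ii); once the complete-the-square identification of $S_Q$ as a half-circle about a real center is in hand, everything else is formal.
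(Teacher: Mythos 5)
Your proof is correct, and on parts (i) and (ii) it takes a genuinely different route from the paper. For (i), the paper only verifies the identity on the generators $S$ and $T$ of $\Gamma$ and lets the (implicit) cocycle property $Q_{(\gamma_1\gamma_2)\tau}=\big((Q\circ\gamma_1)\circ\gamma_2\big)_\tau$ do the rest; you instead expand $Q_{\gamma\tau}$ for a general $\gamma=\left(\begin{smallmatrix}\alpha&\beta\\ \gamma'&\delta\end{smallmatrix}\right)$ using $\im(\gamma\tau)=\im(\tau)/\vt{\gamma'\tau+\delta}^2$, and the numerator you exhibit does indeed decompose in the basis $\{\vt{\tau}^2,\re(\tau),1\}$ with coefficients $Q(\alpha,\gamma')$, $2a\alpha\beta+b(\alpha\delta+\beta\gamma')+2c\gamma'\delta$, $Q(\beta,\delta)$, which are exactly the coefficients of $Q\circ\gamma$. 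This costs a longer computation but avoids having to justify reduction to generators. For (ii), the paper simply cites L\"obrich--Schwagenscheidt and Bringmann--Kane--Kohnen, whereas you give a self-contained argument: since $D$ is non-square, $a\neq 0$, completing the square identifies $S_Q$ as the half-circle $\left(u+\tfrac{b}{2a}\right)^2+v^2=\tfrac{D}{4a^2}$, and the sign of $a\vt{\tau}^2+bu+c$ is $-\sgn(a)$ inside the disk and $+\sgn(a)$ outside; dividing by $v>0$ and using $\sgn(Q)=\sgn(a)$ gives the equivalence. That is a clean, complete replacement for the citation. Part (iii) is the same two-case bookkeeping as in the paper. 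No gaps.
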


\begin{proof}
It suffices to check the first item for the two generators
\begin{align*}
S \coloneqq \left(\begin{array}{cc} 0 & -1 \\ 1 & 0 \end{array}\right), \qquad T \coloneqq \left(\begin{array}{cc} 1 & 1 \\ 0 & 1 \end{array}\right)
\end{align*}
of $\Gamma$. Indeed, we calculate that
\begin{align*}
Q_{S\tau} &= \frac{a\vt{S\tau}^2+b\re\left(S\tau\right)+c}{\im\left(S\tau\right)} = \frac{a\vt{-\frac{1}{\tau}}^2-b\frac{u}{\vt{\tau}^2}+c}{\frac{v}{\vt{\tau}^2}} = \frac{c\vt{\tau}^2-bu+a}{v} = [c,-b,a]_{\tau} \\
&= \left(Q\circ S\right)_{\tau},
\end{align*}
and
\begin{align*}
Q_{T\tau} &= \frac{a\vt{\tau+1}^2+b\re(\tau+1)+c}{\im(\tau+1)} = \frac{a\left((u+1)^2+v^2\right)+b(u+1)+c}{v} \\
&= [a,2a+b,a+b+c]_{\tau} = \left(Q\circ T\right)_{\tau}.
\end{align*}
The second item is stated as a sentence directly in front of \cite{loeschw}*{Lemma 4.4}, and follows by \cite{brikako}*{(5.1), (7.12)}. The third item follows by a case by case analysis using the second item. Indeed, suppose that $\sgn(Q) = 1$. Then the second item implies that
\begin{align*}
\sgn\left(Q_{\tau}\right) = \begin{cases}
-1 & \text{if } \tau \in A_Q, \\
+1 & \text{if } \tau \not\in A_Q,
\end{cases}
\end{align*}
and this coincides with $\sgn(Q) \left(1-2\Id_Q(\tau)\right)$. The case $\sgn(Q) = -1$ follows in the same manner.
\end{proof}

\subsection{Cycle integrals}
Let $Q$ be such that $\Dc(Q)$ is positive and not a square. If $Q=[a,b,c]$ is primitive, and $t$, $r \in \N$ are the smallest solutions to Pell's equation $t^2-\Dc(Q)r^2 = 4$, the stabilizer $\Gamma_Q$ is generated by
\begin{align*}
\pm \left(\begin{array}{cc} \frac{t+br}{2} & cr \\ -ar & \frac{t-br}{2} \end{array}\right).
\end{align*}
If $Q$ is not primitive, one may divide its coefficients by $\gcd(a,b,c)$ to obtain a generator.

The weight $k$ cycle integral of a smooth function $h$, which transforms like a modular form of weight $k$, is defined as\footnote{The normalization by $\Dc(Q)^{\frac{1}{2}-\frac{k}{4}}$ is ommitted by some authors.}
\begin{align} \label{eq:cycdef}
\Cc_k(h,Q) \coloneqq \Dc(Q)^{\frac{1}{2}-\frac{k}{4}} \int_{\geo} h(z) Q(z,1)^{\frac{k}{2}-1} dz.
\end{align}
The integral is oriented counterclockwise if $\sgn(Q) > 0$, and clockwise if $\sgn(Q) < 0$. 

We collect the properties of cycle integrals in the following lemma, which can be proven by calculation, and the fact that $\Gamma_Q$ only depends on the equivalence class of $Q$.
\begin{lemma}
Let $f \colon \H \to \C$ be smooth, and suppose that $f$ is modular of weight $k$. Let $Q$ be a quadratic form of positive, non-square discriminant. Then the weight $k$ cycle integral $\Cc_{k}(f,Q)$ is a class invariant, namely it depends only on the equivalence class of $Q$ under $\sim$. Additionally, the weight $k$ cycle integral $\Cc_{k}(f,Q)$ is invariant under modular substitutions of the integration variable.
\end{lemma}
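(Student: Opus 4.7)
Both claims reduce to a single change of variables in the integral \eqref{eq:cycdef}. My plan is to pick an arbitrary $\gamma = \left(\begin{smallmatrix} p & q \\ r & s \end{smallmatrix}\right) \in \Gamma$ and substitute $z = \gamma w$ in $\Cc_k(f,Q)$; the resulting identity
\[
\Cc_k(f,Q) \;=\; \Cc_k(f, Q \circ \gamma)
\]
immediately yields both the class invariance (since $Q \circ \gamma$ ranges over the class $[Q]_\sim$ as $\gamma$ ranges over $\Gamma$) and the invariance under modular substitutions of the integration variable. The opening observation is that $\Dc(Q \circ \gamma) = \Dc(Q)$, so the normalization factor $\Dc(Q)^{\frac{1}{2}-\frac{k}{4}}$ is unchanged, and what remains is to analyze the integrand and the domain.

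For the integrand I would assemble three pieces. First, the modularity of $f$ gives $f(\gamma w) = (rw+s)^{k} f(w)$. Second, the compatibility of the two actions, $(Q \circ \gamma)(w,1) = (rw+s)^2 Q(\gamma w, 1)$, together with $\tfrac{k}{2}-1$ powers yields $Q(\gamma w,1)^{\frac{k}{2}-1} = (rw+s)^{-(k-2)}(Q \circ \gamma)(w,1)^{\frac{k}{2}-1}$. Third, the Jacobian is $dz = (rw+s)^{-2}\,dw$. Multiplying the three factors the automorphy factor $(rw+s)^{k-(k-2)-2}=1$ cancels exactly, giving
\[
f(z)\,Q(z,1)^{\frac{k}{2}-1}\,dz \;=\; f(w)\,(Q \circ \gamma)(w,1)^{\frac{k}{2}-1}\,dw.
\]
This is the heart of the computation.

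The remaining step is to track the domain of integration. A direct check from the definition of $S_Q$ (or, more conceptually, the fact that $S_Q$ is the axis of the hyperbolic element generating $\Gamma_Q$) shows $\gamma^{-1} S_Q = S_{Q \circ \gamma}$, while $\Gamma_{Q\circ\gamma}=\gamma^{-1}\Gamma_Q\gamma$ by direct computation from the definition of the stabilizer. Hence $w \mapsto \gamma w$ is a bijection $\Gamma_{Q\circ\gamma}\backslash S_{Q\circ\gamma} \to \Gamma_Q \backslash S_Q$, and composing with the integrand identity above proves the desired equality of cycle integrals.

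The one point that needs care is orientation: our convention prescribes counterclockwise integration when $\sgn(Q)>0$ and clockwise when $\sgn(Q)<0$, so I would verify that the change of variable $z=\gamma w$ transports the prescribed orientation on $S_Q$ to the prescribed orientation on $S_{Q\circ\gamma}$. This follows by inspecting the two real endpoints $\tfrac{-b\pm\sqrt{\Dc(Q)}}{2a}$ of $S_Q$ and their images under $\gamma^{-1}$, which are precisely the endpoints of $S_{Q\circ\gamma}$; a short case analysis using the signs of $a$ and of the leading coefficient of $Q\circ\gamma$ confirms that the sign flip (if any) in $\sgn(Q\circ\gamma)$ compared with $\sgn(Q)$ matches the orientation reversal of $S_{Q\circ\gamma}$ relative to the pushed-forward orientation from $S_Q$. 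This bookkeeping is the only mildly technical point; with it in hand the lemma follows.
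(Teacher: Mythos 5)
Your proposal is correct and is essentially the calculation the paper alludes to (the paper only remarks that the lemma follows ``by calculation'' together with the fact that $\Gamma_Q$ depends only on the class of $Q$): the automorphy factors coming from $f$, from $Q(z,1)^{\frac{k}{2}-1}$, and from $dz$ cancel exactly as you say, and $S_{Q\circ\gamma}=\gamma^{-1}S_Q$, $\Gamma_{Q\circ\gamma}=\gamma^{-1}\Gamma_Q\gamma$ identify the domains, so the single identity $\Cc_k(f,Q)=\Cc_k(f,Q\circ\gamma)$ gives both assertions. Your orientation bookkeeping is also sound, and can be streamlined: since $D$ is not a square one always has $a\neq 0$, and the clockwise/counterclockwise convention is precisely the intrinsic orientation of $S_Q$ from $\frac{-b+\Dc(Q)^{1/2}}{2a}$ to $\frac{-b-\Dc(Q)^{1/2}}{2a}$, which is preserved because $\gamma^{-1}$ carries the labelled roots of $Q$ to the correspondingly labelled roots of $Q\circ\gamma$.
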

Hence, $\geo$ projects to a circle in a fundamental domain of $\Gamma$. The beautiful article \cite{duimto11} due to Duke, Imamo\={g}lu, T\'{o}th provides a good reference on Heegner geodesics as well as on cycle integrals.

\subsection{Maa{\ss} forms and modular forms}
We recall the definition of various classes of Maa{\ss} forms appearing in this paper. The slash operator is given by
\begin{align*}
\left(f\vert_k \left(\begin{smallmatrix} a & b \\ c & d \end{smallmatrix}\right)\right)(\tau) \coloneqq \begin{cases}
(c\tau+d)^{-k} f(\gamma\tau) & \text{if} \ k \in \Z, \\
\left(\frac{c}{d}\right)\varepsilon_d^{2k}(c\tau+d)^{-k} f(\gamma\tau) & \text{if} \ k \in \frac{1}{2}+\Z,
\end{cases}
\end{align*}
where $\left(\frac{c}{d}\right)$ denotes the Kronecker symbol, and
\begin{align*}
\varepsilon_d \coloneqq \begin{cases}
1 & \text{if} \ d \equiv 1 \pmod*{4}, \\
i & \text{if} \ d \equiv 3 \pmod*{4}. \\
\end{cases}
\end{align*}

\begin{defn}
Let $k \in \frac{1}{2}\Z$, choose $N \in \N$ such that $4 \mid N $ whenever $k \not\in \Z$, and let $f \colon \H \to \C$ be smooth.
\begin{enumerate}[label=(\roman*), labelindent=0pt, leftmargin=*]
\item We say that $f$ is a weight $k$ harmonic Maa{\ss} form for $\Gamma_0(N)$, if $f$ satisfies the following three properties:
\begin{enumerate}[label=(\alph*), leftmargin=*]
\item For all $\gamma \in \Gamma_0(N)$ and all $\tau \in \H$ we have $\left(f\vert_k\gamma\right)(\tau) = f(\tau)$.
\item The function $f$ is harmonic with respect to the weight $k$ hyperbolic Laplacian on $\H$, that is
\begin{align*}
0 = \Delta_k f \coloneqq \left(-v^2\left(\frac{\partial^2}{\partial u^2}+\frac{\partial^2}{\partial v^2}\right) + ikv\left(\frac{\partial}{\partial u} + i\frac{\partial}{\partial v}\right)\right) f.
\end{align*}
\item The function $f$ is of at most linear exponential growth towards all cusps of $\Gamma_0(N)$.
\end{enumerate}
\item A polar harmonic Maa{\ss} form is a harmonic Maa{\ss} form, which is permitted to posses isolated poles on the upper half plane.
\item A weak Maa{\ss} form satisfies conditions (a) and (c) of a harmonic Maa{\ss} form, but is allowed to have an arbitrary eigenvalue under $\Delta_k$.
\end{enumerate}
\end{defn}

To study his forms \cite{maass49}, Hans Maa{\ss} introduced the Maa{\ss} lowering and raising operators\footnote{Be aware that some authors shift their dependence on $k$, such as Maa{\ss} himself.} \cite{maass52}
\begin{align*}
L_k \coloneqq -2iv^2\frac{\partial}{\partial \overline{\tau}} = iv^2\left(\frac{\partial}{\partial u} + i\frac{\partial}{\partial v}\right), \quad R_k \coloneqq 2i \frac{\partial}{\partial \tau} + \frac{k}{v},
\end{align*}
which decreases or increases the weight of a weak Maa{\ss} form by $2$, and increases the eigenvalue under the hyperbolic Laplace operator by $2-k$ or $k$ respectively. A proof can be found in \cite{thebook}*{Lemma 5.2} for instance. For any $n \in \N_0$, we let 
\begin{align*}
L_{\kappa}^0 &\coloneqq \mathrm{id}, \quad L_{\kappa}^n \coloneqq L_{\kappa-2n+2} \circ \ldots \circ L_{\kappa-2} \circ L_{\kappa} \nonumber, \\
R_{\kappa}^0 &\coloneqq \mathrm{id}, \quad R_{\kappa}^n \coloneqq R_{\kappa+2n-2} \circ \ldots \circ R_{\kappa+2} \circ R_{\kappa}
\end{align*}
be the iterated Maa{\ss} lowering and raising operators respectively.

Bruinier, Funke \cite{brufu} introduced the shadow operator
\begin{align*}
\xi_k \coloneqq 2iv^k\overline{\frac{\partial}{\partial\overline{\tau}}} = iv^{k}\overline{\left(\frac{\partial}{\partial u} + i\frac{\partial}{\partial v}\right)}
\end{align*}
to study harmonic Maa{\ss} forms. They proved that the Fourier expansion of a harmonic Maa{\ss} form splits naturally into a holomorphic and a non-holomorphic part.

We define $M_k^!$ as the space of weakly holomorphic modular forms of weight $k$, and it turns out that $M_k^!$ is precisely kernel of $\xi_k$ restricted to weight $k$ harmonic Maa{\ss} forms. Analogously, a meromorphic modular form of weight $k$ can be regarded as an element of the kernel of $\xi_k$ restricted to weight $k$ polar harmonic Maa{\ss} forms. The space of holomorphic modular forms of weight $k$ is denoted by $M_k \subseteq M_k^!$. More details on various Maa{\ss} forms and their properties can be found in \cite{thebook} for instance.

\subsection{Poincar{\'e} series}
A first class of examples of (weakly) holomorphic modular forms, and of Maa{\ss} forms is given by constructing suitable Poincar{\'e} series. Such functions arise by averaging a specific auxiliary function (``seed''). Various seeds then lead to various examples of Poincar{\'e} series.

\begin{defn} \label{def:poincare}
\
\begin{enumerate}[label=(\roman*), leftmargin=*, labelindent=0pt]
\item For any $m \in \Z$, and any $\kappa \in \N_{>2}$, let 
\begin{align*}
P_{\kappa,m}(\tau) \coloneqq \sum_{\gamma \in \gaminf} q^m \big{\vert}_{\kappa}{\gamma}.
\end{align*}
be the weight $\kappa$ Poincar{\'e} series of exponential type.

\item Let $M_{\mu,\nu}$ be the usual $M$-Whittaker function, $m \in \Z\setminus\{0\}$, and define the seed
\begin{align*}
g_{m}(\tau,s) \coloneqq \frac{\Gamma(s)}{\Gamma(2s)}M_{0,s-\frac{1}{2}}(4 \pi \vt{m} y)\mathrm{e}^{2 \pi i m u}
\end{align*}
Then the Niebur Poincar{\'e} series \cites{nie73, neu73} is given by
\begin{align*}
G_m(\tau,s) \coloneqq \sum_{\gamma \in \gaminf} g_{m}(\tau,s)\big\vert_0 \gamma, \qquad \re(s) > 1.
\end{align*}
\item More generally, define the seed
\begin{align*}
\varphi_{\kappa,m}(\tau) \coloneqq \frac{(-\sgn(m))^{1-\kappa} (4 \pi \vt{m} v)^{-\frac{\kappa}{2}}}{\Gamma(2-\kappa)} M_{\sgn(m\kappa)\frac{\kappa}{2},\frac{1-\kappa}{2}}(4 \pi \vt{m} v) \mathrm{e}^{2 \pi \sgn(\kappa)m u}
\end{align*}
for any $m \in \Z\setminus\{0\}$, and $\kappa \in -\frac{1}{2}\N$. We require the Maa{\ss}-Poincar{\'e} series of negative integral weight $\kappa \in -\N$, which are defined as
\begin{align*}
\Phi_{\kappa,m}(\tau) \coloneqq \sum_{\gamma \in \gaminf} \varphi_{\kappa,m}(\tau)\big\vert_{\kappa} \gamma.
\end{align*}
\item We encounter one of Petersson's Poincar{\'e} series \cite{petersson50}, namely let $\cdot\vert_{k,z_1}$ be the weight $k$-operator acting on $z_1$, and let $k \in \N_{>2}$. Then we define
\begin{align*}
\Pb_k(z_1,z_2) &\coloneqq \im(z_2)^{k-1}\sum_{\gamma \in \Gamma} \left(\frac{1}{(z_1-z_2)(z_1-\overline{z_2})^{k-1}}\right)\Big\vert_{k,z_1} \gamma \\
&= \im(z_2)^{k-1}\sum_{\gamma \in \Gamma} \left(\frac{1}{(z_1-z_2)(z_1-\overline{z_2})^{k-1}}\right)\Big\vert_{2-k,z_2} \gamma
\end{align*}
\end{enumerate}
\end{defn}

We summarize their properties.
\begin{lemma} \label{lem:poincare}
\
\begin{enumerate}[label=(\roman*), leftmargin=*, labelindent=0pt] 
\item The function $P_{k,m}$ is a holomorphic cusp form for any $m > 0$, and a weakly holomorphic modular form for any $m < 0$.
\item The function $G_m(\tau,s)$ is a weak Maa{\ss} form of weight $0$ and eigenvalue $s(1-s)$ in $\tau$.
\item The function $\Phi_{\kappa,m}(\tau)$ is a harmonic Maa{\ss} form of weight $\kappa$. It decays like a cusp form towards all cusps inequivalent to $i\infty$, and the principal part at the cusp $i\infty$ is given by 
\begin{align*}
\varphi_{\kappa,m}(\tau)q^m.
\end{align*}
\item The function $\Pb_k(z_1,z_2)$ is a polar harmonic Maa{\ss} form of weight $2-k$ in $z_2$, and a meromorphic modular form of weight $k$ without a pole at the cusp in $z_1$. Moreover, the singularities of $\Pb_k(z_1,z_2)$ as a function of either argument are the $\Gamma$-orbits of the other argument.
\end{enumerate}
\end{lemma}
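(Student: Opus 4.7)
The plan is to verify each of the four items by (a) establishing absolute convergence of the averaging, (b) reading off the transformation law directly from the construction (averaging over cosets of $\Gamma_{\infty}$ in $\Gamma$, or of $\Gamma$ itself), and (c) propagating the analytic properties of the seed through the sum, using that $\Delta_k$, $\xi_k$, $L_k$, $R_k$ all commute with the weight-$k$ slash action. For items (i)--(iii) these are classical facts whose proofs reduce to brief verifications plus references; only item (iv), Petersson's two-variable kernel, requires a small extra symmetry argument.

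For (i), convergence of $P_{\kappa,m}$ for $\kappa > 2$ follows by comparison with the weight-$\kappa$ holomorphic Eisenstein series, and modularity and holomorphy are immediate from the construction. The expansion at the unique cusp $i\infty$ has principal part equal to the $\gamma = \Id$ contribution $q^m$, which yields a cusp form if $m>0$ and a weakly holomorphic form if $m<0$. For (ii) one checks directly that $g_m(\tau,s)$ satisfies $\Delta_0 g_m = s(1-s) g_m$; this is the defining ODE of $M_{0,s-\frac{1}{2}}$ rewritten in the coordinates $(u,v)$. Absolute convergence of the averaging for $\re(s) > 1$ is standard, and since $\Delta_0$ commutes with the weight-$0$ slash operator, $G_m(\tau,s)$ inherits the same eigenvalue and is a weak Maa{\ss} form of weight $0$.

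Item (iii) is handled in the same spirit: a direct calculation shows that $\varphi_{\kappa,m}$ is annihilated by $\Delta_\kappa$ for $\kappa \in -\N$, and absolute convergence of the averaging follows from the rapid decay of the Whittaker factor in $v$. The claimed principal part at $i\infty$ is extracted by isolating the $\gamma = \Id$ summand, whose local expansion near $i\infty$ is $\varphi_{\kappa,m}(\tau) q^m$; all remaining summands contribute terms that decay (and in particular do not contribute to the principal part), which also yields the cuspidal decay statement. Detailed treatments are available in \cite{thebook}.

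For (iv), absolute convergence for $k \geq 3$ again follows from an Eisenstein-type majorant. Modularity of weight $k$ in $z_1$ is visible from the first presentation and modularity of weight $2-k$ in $z_2$ from the second; their equivalence reduces to the reindexing $\gamma \mapsto \gamma^{-1}$ combined with the cocycle identity $\im(\gamma z_2) = \im(z_2)\vt{cz_2+d}^{-2}$. In $z_1$ the seed is meromorphic with a pole only at $z_1 = z_2$ and decays at $i\infty$, producing a meromorphic modular form of weight $k$ whose singularities lie on $\Gamma z_2$. In $z_2$ one applies $\Delta_{2-k}$ to the factor $\im(z_2)^{k-1}(z_1-z_2)^{-1}(z_1-\overline{z_2})^{-(k-1)}$; a direct computation exploits that $(z_1-z_2)^{-1}$ is holomorphic in $z_2$ and that $\im(z_2)^{k-1}(z_1-\overline{z_2})^{-(k-1)}$ is annihilated by $\xi_{2-k}$, so the seed is harmonic in $z_2$ away from $z_2 = z_1$. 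The main obstacle I anticipate is the bookkeeping in item (iv) — cleanly matching the two displayed sums and cleanly isolating the polar harmonic Maa{\ss} structure in $z_2$ — and for this I would lean on the meromorphic/non-meromorphic splitting recorded in the remark after Theorem \ref{thm:main} together with Petersson's original article \cite{petersson50}.
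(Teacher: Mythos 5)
Your overall strategy matches the paper's: the paper's proof of this lemma is essentially a list of references together with the observations that modularity is immediate from the averaging, that the analyticity/harmonicity conditions follow termwise by absolute convergence, and that the growth conditions require computing Fourier expansions (for (i) see \cite{thebook}*{Theorems 6.8, 6.9}, for (ii) \cite{fay77}*{Theorem 3.4}, for (iii) \cite{thebook}*{pp.~97}, for (iv) \cite{brika20}*{Propositions 3.2, 3.3} and \cite{petersson50}). Items (i)--(iii) of your sketch are fine, with one small caveat: absolute convergence of $\Phi_{\kappa,m}$ for $\kappa\in-\N$ is \emph{not} a consequence of ``rapid decay of the Whittaker factor in $v$'' (the $M$-Whittaker function grows exponentially as $v\to\infty$); it is governed by the vanishing $M_{\mu,\frac{1-\kappa}{2}}(y)\sim y^{1-\frac{\kappa}{2}}$ as $y\to 0$, which makes the seed $O\left(\im(\gamma\tau)^{1-\kappa}\right)$ and reduces convergence to that of a real-analytic Eisenstein series with exponent $1-\kappa>1$.

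The one step that fails as written is in (iv): the factor $\im(z_2)^{k-1}(z_1-\overline{z_2})^{-(k-1)}$ is \emph{not} annihilated by $\xi_{2-k}$. A direct computation gives
\begin{align*}
\xi_{2-k,z_2}\left(\im(z_2)^{k-1}(z_1-\overline{z_2})^{1-k}\right) = (k-1)\left(\overline{z_1}-\overline{z_2}\right)\left(\overline{z_1}-z_2\right)^{-k} \neq 0,
\end{align*}
so your proposed splitting of the seed into a holomorphic factor times a $\xi_{2-k}$-harmonic factor does not establish harmonicity. The correct mechanism uses the full product: one finds
\begin{align*}
\xi_{2-k,z_2}\left(\frac{\im(z_2)^{k-1}}{(z_1-z_2)(z_1-\overline{z_2})^{k-1}}\right) = (k-1)\left(\overline{z_1}-z_2\right)^{-k},
\end{align*}
which is holomorphic in $z_2$ on $\H$ (its singularity sits at $z_2=\overline{z_1}$ in the lower half-plane), and then $\Delta_{2-k}=-\xi_{k}\circ\xi_{2-k}$ annihilates the seed because $\xi_k$ kills holomorphic functions. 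With this correction (or simply by citing \cite{brika20}*{Proposition 3.2}, as the paper does), the rest of your argument for (iv) — the reindexing $\gamma\mapsto\gamma^{-1}$ to match the two presentations, the location of the singularities on the $\Gamma$-orbit of the other variable, and the growth statements — goes through.
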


\begin{proof}
To check the claimed growth conditions, one has to compute the Fourier expansions and investigate the constant term in each expansion. We provide a reference for each item.
\begin{enumerate}[label=(\roman*), leftmargin=*, labelindent=0pt]
\item Compare with \cite{thebook}*{Theorems 6.8, 6.9}.
\item This is computed in \cite{fay77}*{Theorem 3.4} (see \cite{goldfield}*{eq.\ (1.13)}, \cite{duimto11}*{p.\ 19} as well).
\item This can be found in \cite{thebook}*{pp.\ 97}. The projection to Kohnen's plus space was calculated in \cite{bron07}*{Theorem 2.1}.
\item The statement in $z_1$ is due to Petersson \cite{petersson50}, see \cite{brika20}*{Proposition 3.3} as well. The statement in $z_2$ is proven in \cite{brika20}*{Proposition 3.2}. The claim dealing with the singularities of $\Pb_k$ follows by its definition.
\end{enumerate}
Modularity is obvious, and the analycicity condition is straightforward to check due to absolute convergence of each series. 
\end{proof}

We refer the reader to the exposition in \cite{brika20} for more details on $\Pb_k$ and related functions.

\subsection{Locally harmonic Maa{\ss} forms and local cusp forms} \label{sec:lhmf}
In \cite{brikako}, Bringmann, Kane, Kohnen introduced locally harmonic Maa{\ss} forms for $k > 1$, which were independently investigated for $k=1$ (i.\ e.\ weight $0$) by Hövel \cite{hoevel} in his PhD thesis as well. We follow \cite{brikako} here.
\begin{defn}
A locally harmonic Maa{\ss} form of weight $k$ for $\Gamma$ with exceptional set $X \subsetneq \H$ is a function $f \colon \H \to \C$, which satisfies the following properties:
\begin{enumerate}[label=(\roman*), leftmargin=*, labelindent=0pt]
\item For all $\gamma \in \Gamma$ and all $\tau \in \H$ we have $\left(f\vert_k\gamma\right)(\tau) = f(\tau)$.
\item For every $\tau \in \H \setminus X$, there exists a neighborhood of $\tau$, in which $f$ is real analytic and $\Delta_k f = 0$.
\item For every $\tau \in X$, we have
\begin{align*}
f(\tau) = \frac{1}{2} \lim_{\varepsilon \searrow 0} \left(f(\tau+i\varepsilon) + f(\tau-i\varepsilon)\right).
\end{align*}
\item The function $f$ exhibits at most polynomial growth towards the cusp $i\infty$, namely $f \in O\left(v^{\delta}\right)$ for some $\delta > 0$.
\end{enumerate}
\end{defn}
The points in the exceptional set $X$ are called ``jump singularities'' due to a wall-crosing behaviour between any two connected components of $\H \setminus X$. This definition is motivated by the peculiar first example
\begin{align*}
\Fc_{1-k,D}(\tau) \coloneqq \frac{(-1)^k D^{\frac{1}{2}-k}}{\binom{2k-2}{k-1} \pi} \sum_{Q \in \Qc(D)} \sgn\left(Q_{\tau}\right) Q(\tau,1)^{k-1} \psi_k\left(\frac{Dv^2}{\vt{Q(\tau,1)}^2}\right),
\end{align*}
where $D > 0$ is a non-square discriminant, and
\begin{align*}
\psi_k(y) \coloneqq \frac{1}{2} \int_0^y t^{k-\frac{3}{2}} (1-t)^{-\frac{1}{2}} dt
\end{align*}
is a special value of the incomplete $\beta$-function. We observe that ``locality'' is caused precisely by the presence of the sign function in the definition of $\Fc_{1-k,D}$, and indeed Bringmann, Kane, Kohnen proved that $\Fc_{1-k,D}$ satisfies their definition with weight $2-2k \in -2\N$ and exeptional set $\Nc(D)$. 

\begin{defn}
A local cusp form of weight $k$ for $\Gamma$ with exceptional set $X \subsetneq \H$ is a function $f \colon \H \to \C$, which satisfies the following properties:
\begin{enumerate}[label=(\roman*), leftmargin=*, labelindent=0pt]
\item For all $\gamma \in \Gamma$ and all $\tau \in \H$ we have $\left(f\vert_k\gamma\right)(\tau) = f(\tau)$.
\item For every $\tau \in \H \setminus X$, there exists a neighborhood of $\tau$, in which $f$ is holomorphic.
\item For every $\tau \in X$, we have
\begin{align*}
f(\tau) = \frac{1}{2} \lim_{\varepsilon \searrow 0} \left(f(\tau+i\varepsilon) + f(\tau-i\varepsilon)\right).
\end{align*}
\item The function $f$ vanishes as $\tau \to i\infty$.
\end{enumerate}
\end{defn}

Altogether, this motivates the definition and inspection of $\Ecwh_{k,D}(\tau,s)$.

\subsection{The functions $E_2^*$, $j$, and $j_m$}
The holomorphic Eisenstein series are given by
\begin{align*}
E_k(\tau) \coloneqq P_{k,0}(\tau) = 1-\frac{2}{\zeta(1-k)}\sum_{n\geq 1} \left(\sum_{\ell \mid n} \ell^{k-1} \right) q^n,
\end{align*}
where $\zeta$ denotes the Riemann zeta function. If $k \geq 4$ is even then $E_k \in M_k(\Gamma)$, and $E_2$ is quasimodular. We define
\begin{align*}
E_2^*(\tau) \coloneqq E_2(\tau) - \frac{3}{\pi v},
\end{align*}
and observe that $E_2^*$ is a harmonic Maa{\ss} form of weight $2$ for $\Gamma$ (cf.\ \cite{thebook}*{Lemma 6.2}). The modular invariant for $\Gamma$ is the function
\begin{align*}
j(\tau) \coloneqq \frac{E_4(\tau)^3}{\Delta(\tau)} \in M_0^!(\Gamma),
\end{align*}
where
\begin{align*}
\Delta(\tau) \coloneqq q\prod_{n \geq 1}\left(1-q^n\right)^{24} = \frac{E_4(\tau)^3-E_6(\tau)^2}{1728} \in S_{12}(\Gamma)
\end{align*}
is the normalized modular discriminant function. We denote the normalized derivative of $j$ by 
\begin{align*}
j'(\tau) \coloneqq \frac{1}{2\pi i} \frac{\partial j}{\partial \tau}(\tau) = -\frac{E_4(\tau)^2 E_6(\tau)}{\Delta(\tau)} \in M_2^!(\Gamma).
\end{align*}
The latter identity can be verified by Ramanujan's differential system \cite{the123}*{Proposition 15}
\begin{align*}
\frac{1}{2\pi i} \frac{\partial E_2}{\partial \tau} = \frac{E_2^2-E_4}{12}, \qquad \frac{1}{2\pi i} \frac{\partial E_4}{\partial \tau} = \frac{E_2E_4-E_6}{3}, \qquad \frac{1}{2\pi i} \frac{\partial E_6}{\partial \tau} = \frac{E_2E_6-E_4^2}{2}.
\end{align*}
As an intermediate result, one can check that 
\begin{align*}
\frac{1}{2\pi i} \frac{\partial \Delta}{\partial \tau} = E_2(\tau)\Delta(\tau).
\end{align*}
For every $m \geq 0$, let $j_m(\tau)$ be the unique function in the space $M_0^!(\Gamma)$ having a Fourier expansion of the form $q^{-m} + O(q)$. For instance, we have
\begin{align*}
j_0(\tau) = 1, \qquad j_1(\tau) = j(\tau) - 744, \qquad j_2(\tau) = j(\tau)^2 - 1488j(\tau) + 159768,
\end{align*}
and the set $\left\{j_m \colon m \geq 0\right\}$ is a basis for $M_0^!$. This was proven by Asai, Kaneko, Ninomiya \cite{askani}, and they additionally established the expansion
\begin{align*}
\frac{j'(\tau)}{j(w)-j(\tau)} = \sum_{m \geq 0} j_m(w) q^m, \qquad \im{(\tau)} > \im{(w)}.
\end{align*}
Alternatively, the functions $j_m$ can be constructed following \cite{brikaloeonro}. More precisely, the authors proved that the functions $j_m$ form a Hecke system, that is if $T_m$ denotes the normalized Hecke operator, then define $j_0$, $j_1$ as above, and extend inductively by
\begin{align*}
j_m = T_mj_1.
\end{align*}

\section{Hyperbolic Eisenstein series} \label{sec:hypeis}
Let $D > 0$ be a non-square discriminant, $d$ be a positive fundamental discriminant dividing $D$, and $k \in 2\N$. We recall the definition of our two hyperbolic Eisenstein series from the introduction (see equations \eqref{eq:eisold}, \eqref{eq:eisnew}), and the fact that both converge absolutely for any $s \in \C$ with $\re(s) > 1-\frac{k}{2}$.

\begin{rmk}
Let $d_{\mathrm{hyp}}$ be the hyperbolic distance. Then, we have
\begin{align*}
\frac{\vt{Q(\tau,1)}}{v} = \Dc(Q)^{\frac{1}{2}}\cosh{\left(d_{\mathrm{hyp}}(\tau,S_Q)\right)}.
\end{align*}
A proof of this idendity can be found in \cite{voelz18}*{Lemma 2.5.4}. Note that $z \in S_Q$ if and only if $d_{\mathrm{hyp}}\left(z,S_Q\right) = 0$.
\end{rmk}

As outlined in the introduction, the function $\Ec_{2,D}$ possesses an analytic continuation to $s=0$, which can be proven by computing the Fourier expansion of $\Ec_{2,D}$. We recall the result for convenience.
\begin{lemma}[\protect{\cite{hypeis1}*{Theorem 1.1}}] \label{lem:old}
Let $D > 0$ be a non-quare discriminant, $d$ be a positive fundamental discriminant dividing $D$. Then the function $\Ec_{2,D}(\tau,s)$ can be analytically continued to $s=0$ and the continuation is given by
\begin{align*}
\lim_{s \to 0} \Ec_{2,D}(\tau,s) = \frac{-2}{D} \sum_{m \geq 0} \sum_{Q \in \Qc(D)\slash\Gamma} \chi_d(Q) \Cc_0\left(j_m(\cdot) - E_2^*(\tau),Q\right) q^m \\
\end{align*}
for any $\tau \in \H$. Furthermore, if $v$ is sufficiently large, that is $\tau$ is located above the net of geodesics $\Nc(D)$, then we have
\begin{align*}
\lim_{s \to 0} \Ec_{2,D}(\tau,s) = \frac{-2}{D} \sum_{Q \in \Qc(D)\slash\Gamma} \chi_d(Q) \Cc_0\left(\frac{j'(\tau)}{j(w)-j(\tau)} - E_2^*(\tau),Q\right).
\end{align*}
\end{lemma}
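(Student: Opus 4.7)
My plan follows the classical Fourier-expansion strategy for Zagier-type Eisenstein series, adapted to weight $2$, where Hecke's trick is needed. First, I would rewrite the inner sum $\widehat{Q} \sim Q$ by parametrizing $\widehat{Q} = Q \circ \gamma$ with $\gamma \in \Gamma_Q \backslash \Gamma$. Using $(Q \circ \gamma)(\tau,1) = (c\tau+d)^2 Q(\gamma\tau,1)$ together with the transformation of $\sgn(\widehat{Q})$ under $\gamma$ (cf. Lemma \ref{lem:signs}), this turns $\Ec_{2,D}(\tau,s)$ into a weight-$2$ Poincar\'e-type sum whose seed involves $Q(\tau,1)^{-1} v^{s} \vt{Q(\tau,1)}^{-s}$. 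To extract the Fourier expansion in $\tau$, I would then convert the sum over $\Gamma_Q \backslash \Gamma$ into a sum over $\Gamma_\infty \backslash \Gamma$ via a double-coset decomposition, using that $\Gamma_Q$ is cyclic hyperbolic and its fixed geodesic $S_Q$ is disjoint from the cusp $i\infty$.

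Second, I would apply Poisson summation in the cusp parameter. As in \cite{duimto11} and \cite{zagier75}*{Appendix 2}, the resulting $m$-th Fourier coefficient carries a cycle integral $\Cc_0(\cdot, Q)$ multiplied by an $s$-dependent Whittaker/Bessel factor. The decisive identification, derived from the Hecke-system structure of Niebur's Poincar\'e series $G_m(\cdot, 1-s)$ (Lemma \ref{lem:poincare}(ii)), is that at $s=0$ these coefficients collapse to $\Cc_0(j_m, Q)$, because $G_m(\cdot, 1)$ coincides with $j_m$ up to an additive constant. Weighting the cycle integrals by the genus character $\chi_d$ as in \cite{grokoza} then produces the $q^m$-coefficient of the expansion.

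Third, the constant term ($m=0$) demands special care, and this is the heart of the argument: the defining series for $\Ec_{2,D}(\tau,0)$ does not converge absolutely, so analytic continuation in $s$ is required. The zeroth Fourier coefficient contributes a $v^{1-s}$-type piece whose continuation to $s=0$ produces precisely the non-holomorphic $-\tfrac{3}{\pi v}$ regulator distinguishing $E_2^\ast$ from $E_2$ (Hecke's trick). Packaging this with the $m$-th coefficient yields $\Cc_0(j_m(\cdot) - E_2^\ast(\tau), Q)\, q^m$, since $E_2^\ast(\tau)$ is a function of the external variable $\tau$ only and may be absorbed into the integrand via $j_0 \equiv 1$.

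Finally, for the compact ``above the net'' identity, I would note that when $\tau$ lies above $\Nc(D)$, every $w \in S_Q$ satisfies $\im(w) < \im(\tau)$ for every $Q \in \Qc(D)$, so the Asai--Kaneko--Ninomiya expansion $\frac{j'(\tau)}{j(w)-j(\tau)} = \sum_{m \geq 0} j_m(w)\, q^m$ converges uniformly on each cycle. Interchanging the sum over $m$ with the cycle integration in $w$ collapses the Fourier series into the single closed-form cycle integral stated in the lemma. The hardest step is the weight-$2$ Hecke regularization in paragraph three: one must show that the analytic continuation to $s=0$ deposits exactly the $-\tfrac{3}{\pi v}$ non-holomorphic correction and that this merges cleanly with the twisted cycle integrals; controlling this simultaneously with the Poisson summation and the genus-character twist is the delicate technical point.
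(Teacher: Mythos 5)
The paper does not prove this lemma: it is quoted verbatim from \cite{hypeis1}*{Theorem 1.1}, and the only indication given here of how it is established is the remark in the introduction that the computation ``relies mainly on results of Duke, Imamo\={g}lu, T\'{o}th after appealing to Zagier's work on the Fourier expansion'' of $f_{k,D}$. Measured against that, your outline follows the same route: unfold the class sums over $\Gamma_Q\backslash\Gamma$, Poisson-sum at the cusp to extract Fourier coefficients, identify them with twisted traces of cycle integrals of Niebur Poincar\'e series at the harmonic point $s=1$, and use Hecke's trick to handle the weight-$2$ constant term. Your final paragraph is correct and is exactly how the two displayed formulas are reconciled: above the net one has $\im(w)\le \tfrac{\sqrt D}{2}<v$ on every compact cycle (no geodesic is vertical since $D$ is not a square), so the Asai--Kaneko--Ninomiya expansion converges uniformly there and may be interchanged with the cycle integration.

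The caveat is that your second paragraph, the ``decisive identification,'' is where the entire content of the cited theorem lives, and as written it conflates two separate steps. Poisson summation over $b \pmod*{2a}$ does not directly produce cycle integrals; it produces Sali\'e-type exponential sums multiplied by archimedean integrals (hypergeometric in $s$, Bessel at $s=0$). The passage from these to $\sum_Q\chi_d(Q)\,\Cc_0\left(G_{-m}(\cdot,1),Q\right)$ is a second, independent unfolding --- of the cycle integral of the Poincar\'e series, as in \cite{duimto11} --- after which the two expansions must be matched coefficient by coefficient, keeping track of the $\chi_d$-twist and of the $\sgn$-factor inherited from Parson's construction. Likewise, ``$G_{-m}(\cdot,1)$ equals $j_m$ up to an additive constant'' is true at level one, but the constants $24\sigma(m)$ must be tracked: resummed over $m$ they are precisely what turns the naive constant term $j_0\equiv 1$ into $-E_2^*(\tau)$, equivalently into the residual $\frac{3}{\pi v}\,\Cc_0(1,Q)$ correction produced by the continuation. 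You correctly flag the weight-$2$ regularization as the hard point, but these identifications are asserted rather than carried out, so the proposal is an accurate map of the proof in \cite{hypeis1} rather than a self-contained proof.
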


Along the lines of Lemma \ref{lem:signs} (iii), we define
\begin{align*}
\Ecwt_{k,D}(\tau,s) \coloneqq \sum_{Q \in \Qc(D)\slash\Gamma} \chi_d\left(Q\right) \sum_{\widehat{Q} \sim Q} \frac{\sgn{(\widehat{Q})}^{\frac{k}{2}} \Id_Q(\tau) v^s}{\widehat{Q}(\tau,1)^{\frac{k}{2}} \vt{\widehat{Q}(\tau,1)}^s}.
\end{align*}

\begin{rmk}
In \cite{zagier10}, Zagier introduces the notion of quantum modular forms, and discusses some examples. In particular, his second example involves the quantum modular form
\begin{align*}
\sum_{Q \in \Qc(D)} \max\{Q(x,1),0\}^5 = \sum_{\substack{Q \in \Qc(D) \\ Q(x,1) > 0}} Q(x,1)^5, \qquad x \in \Q,
\end{align*}
which appears also in his paper \cite{zagier99}. By Lemma \ref{lem:signs} (ii), we have $\Id_Q(\tau) = 1$ if and only if $\sgn(Q)\sgn(vQ_{\tau}) = -1$. As the zeros of $Q(\tau,1)$ are quadratic irrationals, the limit $\lim_{\tau \to x} \frac{1}{Q(\tau,1)}$ exists for every $x \in \Q$. Furthermore, we note that 
\begin{align*}
\lim_{\tau \to x} (vQ_{\tau}) = \lim_{\tau \to x} \left(a\vt{\tau}^2+bu+c\right) = Q(x,1).
\end{align*}
Altogether, this suggests that there might be a connection of the rational function (taking $d=1$ here)
\begin{align*}
x \mapsto \lim_{\tau \to x} \Ecwt_{2k-2,D}(\tau,0) = \sum_{\substack{Q \in \Qc(D) \\ \sgn(Q)\sgn(Q(x,1))=-1}} \frac{\sgn(Q)^{k-1}}{Q(x,1)^{k-1}} = -2\sum_{\substack{Q \in \Qc(D) \\ Q(x,1) > 0 }} \frac{1}{Q(x,1)^{k-1}}
\end{align*}
to quantum modular forms for certain weights $k$.
\end{rmk}

We combine Lemmas \ref{lem:signs}, \ref{lem:old}.
\begin{prop}
Assume that $0 < k \equiv 2 \pmod*{4}$, $\tau \in \H\setminus\Nc(D)$, and $\re(s) > 1-\frac{k}{2}$. 
\begin{enumerate}[label=(\roman*), leftmargin=*, labelindent=0pt]
\item The function $\Ecwh_{k,D}(\tau, s)$ is modular of weight $k$, and we have
\begin{align}
\Ecwh_{k,D}(\tau, s) = \Ec_{k,D}(\tau, s) - 2 \Ecwt_{k,D}(\tau,s). \label{eq:eisidentity}
\end{align}
\item The function $\Ecwh_{2,D}(\tau, s)$ has an analytic continuation to $s=0$.
\item The identity \eqref{eq:eisidentity} holds for the case of $k=2$, $s=0$ as well.
\end{enumerate}
\end{prop}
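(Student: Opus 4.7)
My plan is to handle the three parts sequentially: (i) is a direct computation with Lemma \ref{lem:signs}, (ii) rests on a finiteness argument for $\Ecwt_{2,D}$, and (iii) drops out of analytic continuation. For the modularity in (i), I would substitute $\gamma\tau$ and track the factors: the integer-power identity $\widehat{Q}(\gamma\tau,1)^{k/2} = (c\tau+d)^{-k}(\widehat{Q}\circ\gamma)(\tau,1)^{k/2}$ yields the expected automorphy factor $(c\tau+d)^k$, while the matching $|c\tau+d|^{-2s}$ from $\vt{\widehat{Q}(\gamma\tau,1)}^s$ and $\im(\gamma\tau)^s$ cancel. The decisive step is Lemma \ref{lem:signs}(i), which gives $\sgn(\widehat{Q}_{\gamma\tau}) = \sgn((\widehat{Q}\circ\gamma)_\tau)$; after the bijective reindexing $\widetilde{Q} \coloneqq \widehat{Q}\circ\gamma$ of the inner sum over the equivalence class $[Q]$, the sign factor becomes $\sgn(\widetilde{Q}_\tau)^{k/2}$ and the original shape is restored. (Note that the analogous reindexing of $\Ec_{k,D}$ fails precisely because $\sgn(\widehat{Q})$ itself is \emph{not} a class invariant, which is why $\Ec_{k,D}$ is only a modular integral.) For the identity, I would apply Lemma \ref{lem:signs}(iii) termwise to write $\sgn(\widehat{Q}_\tau)^{k/2} = \sgn(\widehat{Q})^{k/2}(1 - 2\Id_{\widehat{Q}}(\tau))^{k/2}$; since $k \equiv 2 \pmod*{4}$ forces $k/2$ to be odd and $1 - 2\Id_{\widehat{Q}}(\tau) \in \{\pm 1\}$, the outer exponent collapses, and splitting off the $-2\Id_{\widehat{Q}}(\tau)$ term produces $\Ec_{k,D}(\tau,s) - 2\Ecwt_{k,D}(\tau,s)$.

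For part (ii), I would establish that $\Ecwt_{2,D}(\tau,s)$ is actually a finite sum for each fixed $\tau \in \H\setminus\Nc(D)$, hence trivially entire in $s$; combined with Lemma \ref{lem:old} (which provides the analytic continuation of $\Ec_{2,D}(\tau,s)$ to $s=0$) and the identity from part (i), valid for $\re(s) > 0$, this yields the analytic continuation of $\Ecwh_{2,D}(\tau,s)$. To prove finiteness, note that since $D$ is non-square every $\widehat{Q} = [a,b,c] \in \Qc(D)$ has $a \neq 0$, and $A_{\widehat{Q}}$ coincides with the Euclidean half-disk $\{u+iv \in \H \colon (2au+b)^2 + 4a^2 v^2 < D\}$. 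If $\tau = u+iv$ belongs to this region, the defining inequality forces $\vt{a} < \sqrt{D}/(2v)$ and $\vt{2au + b} < \sqrt{D}$, bounding both $a$ and $b$; the coefficient $c$ is then pinned down by $b^2 - 4ac = D$. Consequently, only finitely many $\widehat{Q} \sim Q$ contribute, and $\Ecwt_{2,D}(\tau,s)$ is a finite linear combination of entire functions $\widehat{Q}(\tau,1)^{-1} \cdot (v/\vt{\widehat{Q}(\tau,1)})^s$.

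Part (iii) follows immediately from (ii) by uniqueness of analytic continuation: both sides of $\Ecwh_{2,D}(\tau,s) = \Ec_{2,D}(\tau,s) - 2\Ecwt_{2,D}(\tau,s)$ are holomorphic in $s$ in a neighborhood of $s = 0$ and coincide on the half-plane $\re(s) > 0$, so they must agree at $s = 0$ as well. The principal obstacle throughout is the finiteness claim in part (ii); the geometric picture is that geodesic half-disks arising from forms of large $\vt{a}$ are squeezed close to the real axis and cannot reach a given $\tau$ of positive imaginary part, but pinning this down requires the explicit Euclidean description of $A_{\widehat{Q}}$, since $A_{\widehat{Q}}$ is Euclideanly bounded yet hyperbolically infinite and more abstract proper discontinuity arguments do not apply directly. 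The remaining steps are essentially bookkeeping with Lemma \ref{lem:signs}.
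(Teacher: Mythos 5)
Your proposal is correct, and parts (i) and (iii) follow the paper's route (the paper merely asserts that (i) is ``a direct consequence of Lemma \ref{lem:signs}''; your computation with the compatibility relation $(\widehat{Q}\circ\gamma)(\tau,1)=(c\tau+d)^2\widehat{Q}(\gamma\tau,1)$, the cancellation of $\vt{c\tau+d}^{-2s}$ against $\im(\gamma\tau)^s$, and the reindexing $\widehat{Q}\mapsto\widehat{Q}\circ\gamma$ supplies exactly the missing details, and your use of the oddness of $k/2$ to collapse $(1-2\Id_{\widehat{Q}}(\tau))^{k/2}$ is the intended mechanism behind \eqref{eq:eisidentity}). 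Where you genuinely diverge is part (ii). The paper continues $\Ecwt_{2,D}(\tau,s)$ to $s=0$ by observing that it vanishes above the net of geodesics and agrees locally, in each bounded component of $\H\setminus\Nc(D)$, with a sub-sum of $\Ec_{2,D}(\tau,s)$, so that the Fourier-expansion machinery of Lemma \ref{lem:old} (imported from \cite{hypeis1}) yields the continuation. You instead prove that for fixed $\tau\in\H\setminus\Nc(D)$ only finitely many $\widehat{Q}=[a,b,c]$ satisfy $\tau\in A_{\widehat{Q}}$ — via the Euclidean description $(2au+b)^2+4a^2v^2<D$, which bounds $a$ and $b$ and determines $c$ — so that $\Ecwt_{2,D}(\tau,s)$ is a finite sum of functions entire in $s$ and needs no continuation at all. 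This is more elementary and self-contained (it bypasses the expansion computation entirely and would work verbatim for all even $k$), at the cost of not producing the local Fourier expansion of $\Ecwt_{2,D}$ that the paper's detour provides as a byproduct. Your finiteness computation is sound, and the concluding appeal to uniqueness of analytic continuation for (iii) matches the paper's ``uniqueness of the limit.''
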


\begin{proof}
The first item is a direct consequence of Lemma \ref{lem:signs}. Thus, it suffices to show that $\Ecwt_{2,D}(\tau,s)$ has an analytic continuation to $s=0$ to prove the second item. To this end, we observe that $\Ecwt_{k,D}(\tau,s)$ vanishes above the net of geodesics $\Nc(D)$, and coincides locally with $\Ec_{k,D}(\tau,s)$ up to some non-zero constant in any bounded connected component of $\H \setminus \Nc(D)$. Hence, one may obtain a Fourier expansion of $\Ecwt_{2,D}$ locally by Lemma \ref{lem:old}. (The computation was presented in \cite{hypeis1}). This establishes the existence of $\Ecwh_{2,D}(\tau,0)$ via the identity \eqref{eq:eisidentity} from the first item, and in addition proves the third item by uniqueness of the limit.
\end{proof}

Moreover, we recall the Fourier expansion of $\Ec_{k,D}(\tau,0)$ for higher weights.
\begin{lemma}[\protect{\cite{hypeis1}*{Theorem 1.2}}]
Let $D > 0$ be a non-square discriminant, let $d$ be a positive fundamental discriminant dividing $D$, and suppose that $k \geq 4$ is even. Then, we have the Fourier expansion
\begin{align*}
\Ec_{k,D}(\tau,0) = \frac{(-1)^{\frac{k}{2}}2 \pi^{\frac{k}{2}}}{D^{\frac{k+2}{4}}\Gamma\left(\frac{k}{4}\right)^2} \sum_{m \geq 1} m^{\frac{k}{2}-1} \sum_{Q \in \Qc(D)\slash\Gamma} \chi_d(Q) \Cc_0 \left(G_{-m}\left(\cdot,\frac{k}{2}\right)\right) q^m.
\end{align*}
\end{lemma}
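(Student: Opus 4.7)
The plan is to derive the Fourier expansion of $\Ec_{k,D}(\tau,0)$ by a direct unfolding computation, adapting Zagier's argument in \cite{zagier75}*{Appendix 2} for the Fourier expansion of $f_{k/2,D}$ and the Niebur-type identifications of cycle integrals established by Duke, Imamo\u{g}lu, T\'{o}th in \cite{duimto11}. Since $k \equiv 0 \pmod{2}$ and $k \geq 4$, convergence at $s=0$ is guaranteed by Petersson's bound stated after \eqref{eq:eisnew}, so no analytic continuation is needed here.

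First I would partition the double sum according to $\Gamma$-equivalence classes. Fix a class $Q \in \Qc(D)/\Gamma$ and parametrize $\widehat{Q} \sim Q$ via the bijection $\Gamma_Q \backslash \Gamma \ni \gamma \mapsto Q \circ \gamma$. Using the compatibility $\widehat{Q}(\tau,1) = (c\tau+d)^{2}\,Q(\gamma\tau,1)$ together with Lemma~\ref{lem:signs}(i) (which gives $(Q\circ\gamma)_\tau = Q_{\gamma\tau}$ and hence the appropriate transformation of $\sgn(\widehat{Q})^{k/2}$), the inner sum factors as $(c\tau+d)^{-k}$ times a seed evaluated at $\gamma\tau$. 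This is the standard step that converts the sum over $\widehat{Q}$ into an average of a seed function over $\Gamma_Q \backslash \Gamma$.

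Second, I would compute the Fourier coefficients by the classical geodesic-unfolding trick: for the $m$-th coefficient, use a fundamental strip for $\Gamma_\infty \backslash \H$ and observe that the quotient $\Gamma_Q \backslash \Gamma$ of the remaining stabilizer unfolds the integration to $\Gamma_Q \backslash S_Q$ (the Heegner geodesic modulo its stabilizer). The transverse integral to $S_Q$, evaluated via the substitution $t = \Dc(Q)^{1/2} v / |Q(\tau,1)|$, produces an integral of the form $\int_0^\infty t^{k/2-1}(1-t^2)^{s-1}e^{-4\pi m t|m|y}\,\mathrm{d}t$, which at $s=0$ is a special case of the integral representation of the $M$-Whittaker function $M_{0,k/2-1/2}(4\pi |m| v)$. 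This is exactly the seed $g_{-m}(\tau, k/2)$ of the Niebur Poincar\'e series, matching the normalization $\Gamma(k/2)/\Gamma(k)$ used in Definition~\ref{def:poincare}(ii).

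Third, after reassembling the geodesic average as the weight $0$ cycle integral $\Cc_0(G_{-m}(\cdot, k/2), Q)$ (using the normalization $\Dc(Q)^{1/2-k/4} = D^{1/2 - k/4}$ from \eqref{eq:cycdef}), I would collect the remaining factors — the gamma quotients from the transverse integral and from the seed $g_{-m}$, the factors of $2\pi$ from the Fourier integral, the sign $(-1)^{k/2}$ coming from the orientation of the geodesic together with $\sgn(\widehat{Q})^{k/2}$, and the overall $m^{k/2-1}$ arising from the change of variables — and compare to the prefactor $(-1)^{k/2}\cdot 2\pi^{k/2}/(D^{(k+2)/4}\Gamma(k/4)^2)$ claimed in the lemma. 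Finally, the outer sum over classes inherits the genus character $\chi_d(Q)$, producing the advertised twisted trace.

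The main obstacle is bookkeeping the constants: the duplication formula for $\Gamma$ is needed to convert $\Gamma(k/2)$-type quantities into the $\Gamma(k/4)^2$ denominator, and the sign exponent $k/2$ must be tracked carefully through the geodesic orientation convention (counterclockwise for $\sgn(Q)>0$, clockwise otherwise) entering \eqref{eq:cycdef}. Everything else is a straightforward generalization of Zagier's expansion of $f_{k/2,D}$ incorporating the twist by $\chi_d$.
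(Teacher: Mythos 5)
There is a genuine gap, and it occurs at your very first step. You invoke Lemma~\ref{lem:signs}(i) to claim that the factor $\sgn(\widehat{Q})^{k/2}$ transforms ``appropriately'' under the parametrization $\widehat{Q}=Q\circ\gamma$, so that the class sum becomes an average of a single seed over $\Gamma_Q\backslash\Gamma$. Lemma~\ref{lem:signs}(i) is a statement about $Q_\tau$, i.e.\ about $\sgn(\widehat{Q}_\tau)$, not about $\sgn(\widehat{Q})$, which is the sign of the \emph{leading coefficient} of $\widehat{Q}$ and does not vary equivariantly along the orbit $\gamma\mapsto Q\circ\gamma$. If your step were correct, $\Ec_{k,D}(\tau,s)$ in \eqref{eq:eisold} would be a genuine weight-$k$ Poincar\'e series and hence modular; but for $k\equiv 2\pmod 4$ (where $k/2$ is odd, so the sign really enters) the whole point of the paper is that this sign produces a non-zero obstruction to modularity, which is exactly why the completed series $\Ecwh_{k,D}$ with $\sgn(\widehat{Q}_\tau)$ is introduced. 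Your argument would go through essentially as written only when $k\equiv 0\pmod 4$, where $\sgn(\widehat{Q})^{k/2}=1$ and one is back in the situation of Gross--Kohnen--Zagier \cite{grokoza}; the lemma, however, is claimed (and needed) for all even $k\ge 4$, and for $k\equiv 2\pmod 4$ the sign must be tracked by hand, e.g.\ by splitting the class according to the sign of the leading coefficient of $Q\circ\gamma$, as in Zagier \cite{zagier75}*{Appendix 2} and Parson \cite{parson}.

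The second step is also not sound as described. The $m$-th Fourier coefficient is an integral over $u\in[0,1]$ at fixed height $v$; summing over $\Gamma_Q\backslash\Gamma$ does not ``unfold the integration to $\Gamma_Q\backslash S_Q$''. What actually links the two sides is either a direct Zagier-style evaluation of the coefficients (grouping forms by leading coefficient, evaluating the resulting real-line integrals) followed by a separate identification of those sums with the unfolded cycle integral $\Cc_0(G_{-m}(\cdot,\tfrac k2),Q)$ via the results of Duke--Imamo\={g}lu--T\'oth \cite{duimto11} — this is the route taken in the cited source \cite{hypeis1}, which the present paper simply quotes — or a careful double-coset unfolding over $\Gamma_\infty\backslash\Gamma/\Gamma_Q$, which your sketch does not set up. Note also that with the substitution $t=\Dc(Q)^{1/2}v/\vt{Q(\tau,1)}$ one has $t\in(0,1]$ (since $\vt{Q(\tau,1)}/v=\Dc(Q)^{1/2}\cosh d_{\mathrm{hyp}}(\tau,S_Q)$), so the transverse integral you write down, taken over $(0,\infty)$ against $(1-t^2)^{s-1}$, is not the integral that arises, and the identification with the $M$-Whittaker seed of $G_{-m}$ at $s=0$ needs an actual computation rather than an appeal to ``a special case of the integral representation''. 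Until the sign issue and the unfolding mechanism are repaired, the proposal does not yield the stated expansion.
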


Since $\Ec_{k,D}$ converges absolutely on $\H$ at $s=0$ for any $k \geq 4$ even, we may rearrange its Fourier expansion, and study the integrand
\begin{align*}
f(w,\tau) \coloneqq \sum_{m \geq 1} m^{\frac{k}{2}-1} G_{-m}\left(w,\frac{k}{2}\right) q^m, \quad w \in \geo, \quad \tau \in \H
\end{align*}
inside the cycle integral. In other words, we may rewrite the Fourier expansion from the previous Lemma as
\begin{align*}
\Ec_{k,D}(\tau, 0) = \frac{(-1)^{\frac{k}{2}}2 \pi^{\frac{k}{2}}}{D^{\frac{k+2}{4}}\Gamma\left(\frac{k}{4}\right)^2} \sum_{Q \in \Qc(D)\slash\Gamma} \chi_d(Q) \Cc_0(f(\cdot,\tau),Q).
\end{align*}
We obtained an alternative representation of the Fourier expansion of $\Ec_{2,D}(\tau, 0)$ already if $\tau$ is located in the unbounded component of a fundamental domain for $\Gamma$. The main ingredient to prove the second claim of Theorem \ref{thm:main} is to find such an representation in the case of higher weights under the same assumption on $\tau$.
\begin{prop} \label{prop:cycpoin}
Let $2 < k \equiv 2 \pmod*{4}$, let $D > 0$ be a non-square discriminant, and $d$ be a positive fundamental discriminant dividing $D$. Suppose that $v$ is sufficiently large, that is $\tau$ is located above the net of geodesics $\Nc(D)$. Then $\Ec_{k,D}(\tau,0)$ coincides with the function
\begin{align*}
\sum_{Q \in \Qc(D)\slash\Gamma} \chi_d(Q) \Cc_{2-k}\left(\Pb_k(\tau,\cdot),Q\right)
\end{align*}
up to an explicit non-zero constant, which is provided in equation \eqref{eq:finalconstant}.
\end{prop}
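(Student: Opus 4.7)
The strategy is to match Fourier expansions in $\tau$ with $\tau$ above the net $\Nc(D)$, exploiting that both sides are weight-$k$ modular forms in that region: the left-hand side by the Fourier expansion recalled just above from \cite{hypeis1}*{Theorem 1.2}, and the right-hand side by Lemma \ref{lem:poincare} (iv) together with $\Gamma_Q$-invariance of the cycle-integral measure. By absolute convergence of $f(w,\tau) = \sum_{m \geq 1} m^{k/2-1} G_{-m}(w, k/2) q^m$ on compact subsets of each $\geo$, I first interchange the $m$-summation with the cycle integral in that expression to obtain
\[
\Ec_{k,D}(\tau, 0) = \frac{(-1)^{k/2} 2\pi^{k/2}}{D^{(k+2)/4}\, \Gamma(k/4)^2} \sum_{m \geq 1} m^{k/2-1} q^m \sum_{Q \in \Qc(D)/\Gamma} \chi_d(Q)\, \Cc_0\big(G_{-m}(\cdot, k/2), Q\big).
\]

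Next, I would compute the Fourier expansion of $\Pb_k(\tau, w)$ in $\tau$ for $\im(\tau)$ sufficiently large, uniformly in $w$ on compact pieces of each geodesic. Starting from Definition \ref{def:poincare} (iv), I group the sum by right cosets of $\Gamma_\infty$ in $\Gamma$: the trivial coset yields $\im(w)^{k-1} \sum_n \frac{1}{(\tau+n-w)(\tau+n-\bar w)^{k-1}}$, which unfolds via partial fractions in $\tau$ and the Lipschitz summation formula into an explicit $q^m$-expansion with coefficients polynomial in $1/\im(w)$ and exponential in $\im(w)$; the remaining cosets yield the standard Kloosterman-Bessel expansion from Petersson's classical computation. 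Interchanging the $m$-summation with $\Cc_{2-k}$ then produces a $q$-expansion of the right-hand side whose $m$-th Fourier coefficient equals $\sum_Q \chi_d(Q) \Cc_{2-k}(A_m, Q)$, for a weight-$(2-k)$ function $A_m(w)$ which, up to an explicit scalar, coincides with the Maa{\ss}-Poincar{\'e} series $\Phi_{2-k,-m}(w)$ of Definition \ref{def:poincare} (iii).

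The heart of the proof is then the cycle-integral identity
\[
\Cc_{2-k}\big(A_m, Q\big) = C'(k)\, m^{k/2-1}\, \Cc_0\big(G_{-m}(\cdot, k/2), Q\big),
\]
with $C'(k)$ independent of $m$ and $Q$, and this is where I expect the main obstacle to lie. I would attempt it either by matching the explicit Bessel-Kloosterman Fourier expansions of $G_{-m}(\cdot, k/2)$ (via Niebur's formulas) and of $A_m(w)$ (via the preceding computation) inside the cycle integrals term-by-term, or by exploiting that $\Phi_{2-k,-m}$ arises from $G_{-m}(\cdot, k/2)$ under iterated Maa{\ss} lowering $L_0^{(k-2)/2}$ and invoking Stokes' theorem on the closed cycle $\geo$ to transfer those operators onto the weight shift between $Q(w,1)^{-1}$ and $Q(w,1)^{-k/2}$. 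Bridging the two geometrically distinct objects, a weight-$0$ average of Niebur seeds versus the reproducing-kernel-type $\Pb_k$, requires careful tracking of Gamma factors, Bessel function normalizations, and sign conventions; assembling the resulting constant with the outer factor from the first display produces the $C(k)$ referenced in equation \eqref{eq:finalconstant}.
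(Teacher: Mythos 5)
Your second route for the key step is essentially the paper's proof: the paper lowers the Niebur seed explicitly to show $L_0^{k/2-1}G_{-m}(\cdot,\frac{k}{2})$ is a multiple of $\Phi_{2-k,-m}$ (with the $m$-dependence exactly cancelling the $m^{k/2-1}$ in the Fourier expansion of $\Ec_{k,D}$), transfers the lowering operators through the cycle integral via the Alfes-Neumann--Schwagenscheidt identity (Lemma \ref{lem:cycid}, which is the Stokes-type argument you anticipate), and then sums over $m$ using the Bringmann--Kane generating-function identity $\sum_{m\geq 1}\Phi_{2-k,-m}(w)q^m = \frac{i}{2\pi}(2i)^{k-1}\Pb_k(\tau,w)$ for $\im(\tau)$ large. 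The two steps you flag as the main obstacles are both citable (\cite{alneschw}*{Theorem 1.1} and \cite{brika20}*{eq.\ (3.10), (3.11)}), so you should commit to that route and drop the term-by-term Bessel--Kloosterman matching, which would be far messier and is not needed.
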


\begin{rmk}[Rearranging the Fourier expansion]
Let $W_{\mu,\nu}$ be the usual $W$-Whittaker function. Inserting the Fourier expansion of $G_{-m}$, next comparing with the Fourier expansion of $P_{k,m}$ (see the proof of Lemma \ref{lem:poincare} for a list of references), and rearranging further, one obtains
\begin{multline*}
f(w,\tau) = \frac{\Gamma\left(\frac{k}{2}\right)}{\Gamma(k)} \sum_{m \geq 1} m^{\frac{k}{2}-1} M_{0,\frac{k}{2}-\frac{1}{2}}(4 \pi \vt{m} \im(w)) \mathrm{e}^{-2\pi i m \re(w)} q^m \\
 + \frac{2^{2-k}\pi^{-\frac{k}{2}}\Gamma(k)}{(k-1)\Gamma\left(\frac{k}{2}\right)}  \sin\left(\frac{\pi}{2}(1-k)\right) \im(w)^{1-\frac{k}{2}} \left(E_k(\tau)-1\right) \\
 + i^{-k} \sum_{n \neq 0} \vt{n}^{\frac{k-1}{2}} W_{0,\frac{k}{2}-\frac{1}{2}}(4\pi \vt{n} \im(w)) \left(P_{k,n}(\tau)-q^{n}\right)\mathrm{e}^{-2\pi i  n \re(w)}.
\end{multline*}
However, we may not split the final sum involving $P_{k,n}(\tau)-q^{n}$ into two separate sums over $n$, since the resulting expressions would not converge with respect to $\tau$. This emphasizes the error to modularity of $\Ec_{k,D}$ from a different viewpoint.
\end{rmk}

\section{Proof of Theorem \ref{thm:main}} \label{sec:proofmain}
We begin with the proof of Proposition \ref{prop:cycpoin}. To this end, we write $w = x+iy \in \geo$ for the integration variable of the cycle integral, and collect three intermediate results first. In case of ambiguity, we specify the variable a Maa{\ss} operator shall act on by an additional subscript next to the weight.

The first step is to convert $G_{-m}$ to a harmonic Maa{\ss} form.
\begin{lemma}
We have
\begin{align*}
\left(L_0^{\frac{k}{2}-1} G_{-m}\right)\left(w,\frac{k}{2}\right) = \frac{C_1(k)\Gamma(k)}{(8\pi\vt{m})^{\frac{k}{2}-1}} \Phi_{2-k,-m}(w), \quad C_1(k) \coloneqq \prod_{j=0}^{\frac{k-4}{2}} (k+2j).
\end{align*}
\end{lemma}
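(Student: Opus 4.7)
My approach is to apply the iterated lowering operator $L_0^{k/2-1}$ termwise to the Poincar\'e series defining $G_{-m}(w, k/2)$, thereby reducing the lemma to a seed-level identity involving $M$-Whittaker functions. The first ingredient is the intertwining property
\[
L_\kappa(f|_\kappa \gamma) = (L_\kappa f)|_{\kappa-2}\gamma, \qquad \gamma \in \Gamma,
\]
which iterates to $L_0^{k/2-1}(f|_0 \gamma) = (L_0^{k/2-1}f)|_{2-k}\gamma$. Combined with the absolute, locally uniform convergence of $G_{-m}(w, s)$ for $\re(s) > 1$ (and hence at $s = k/2$ when $k > 2$), dominated convergence justifies moving $L_0^{k/2-1}$ inside the Poincar\'e sum, giving
\[
L_0^{\frac{k}{2}-1} G_{-m}\!\left(w, \tfrac{k}{2}\right) = \sum_{\gamma \in \Gamma_\infty \backslash \Gamma} \left(L_0^{\frac{k}{2}-1} g_{-m}\!\left(\cdot, \tfrac{k}{2}\right)\right)\Big|_{2-k}\gamma.
\]

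The task is now to match this Poincar\'e sum with $\frac{C_1(k)\Gamma(k)}{(8\pi|m|)^{k/2-1}}\Phi_{2-k,-m}(w)$, which is equivalent to proving the seed identity
\[
L_0^{\frac{k}{2}-1} g_{-m}\!\left(w, \tfrac{k}{2}\right) = \frac{C_1(k)\Gamma(k)}{(8\pi|m|)^{\frac{k}{2}-1}}\,\varphi_{2-k,-m}(w).
\]
Both sides factor as an exponential in $u$ times a function of $v$ built from $M$-Whittaker functions with $\mu = (k-1)/2$, so the identity reduces to a one-variable computation in $v$. Explicitly, for a function of the form $\phi(v)e^{2\pi i n u}$ of weight $\kappa$, the lowering operator produces (up to a sign) $v^2\bigl(\phi'(v) - 2\pi n \phi(v)\bigr)e^{2\pi i n u}$, which after the substitution $z = 4\pi|m|v$ becomes a combination of $z\,d/dz$, multiplication by $z$, and multiplication by constants. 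This is precisely the shape of a standard Whittaker ladder identity, e.g.
\[
\left(z\frac{d}{dz} - \kappa' + \tfrac{z}{2}\right) M_{\kappa',\mu}(z) = -\left(\tfrac{1}{2}+\mu-\kappa'\right) M_{\kappa'-1,\mu}(z),
\]
so that each application of $L$ shifts the first Whittaker index by $1$ and contributes a factor of the form $\tfrac{1}{2}+\mu-\kappa'$. A simple induction shows that iterating $k/2-1$ times collapses these factors into the product $C_1(k) = k(k+2)\cdots(2k-4)$, while the accumulated powers of $v^2$ and $4\pi|m|$ combine to $(8\pi|m|)^{k/2-1}$.

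The principal obstacle is the bookkeeping: tracking the accumulated numerical factors, the correct total power of $4\pi|m|$, and the evolution of the first Whittaker index through the $k/2-1$ iterations. In particular, one must reconcile the signs appearing in $(-\sgn(\cdot))^{1-\kappa}$, $\sgn(m\kappa)$, and $\sgn(\kappa)$ within the definition of $\varphi_{2-k,-m}$ with those produced by repeated differentiation, and verify that the resulting Whittaker index at the end of the iteration agrees with $\sgn((-m)(2-k))(2-k)/2$. Once the conventions are aligned, the surviving prefactors $\Gamma(k/2)/\Gamma(k)$ from $g_{-m}$ and $1/\Gamma(k)$ from $\varphi_{2-k,-m}$ account for the $\Gamma(k)$ in the numerator, completing the identification.
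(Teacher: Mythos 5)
Your proposal follows essentially the same route as the paper's proof: absolute convergence justifies differentiating the seed termwise, and the iterated lowering of $M_{0,\frac{k}{2}-\frac{1}{2}}(4\pi\vt{m}y)\mathrm{e}^{-2\pi i m x}$ is evaluated by a Whittaker contiguous relation, each step contributing one linear factor and one power of $y/2$, which together with the prefactors $\Gamma\left(\frac{k}{2}\right)/\Gamma(k)$ from $g_{-m}$ and $1/\Gamma(k)$ from $\varphi_{2-k,-m}$ assemble into the stated constant. The one detail to correct is the direction of your ladder: since the seed carries $\mathrm{e}^{-2\pi i m u}$ with $m>0$, the lowering operator pairs with the relation $\left(z\frac{d}{dz}-\frac{z}{2}+\kappa'\right)M_{\kappa',\mu}(z)=\left(\mu+\kappa'+\frac{1}{2}\right)M_{\kappa'+1,\mu}(z)$, so the first Whittaker index climbs from $0$ up to $\frac{k}{2}-1$ (matching the paper's $M_{\frac{\ell}{2}+1,\frac{k}{2}-\frac{1}{2}}$ with $\ell=k-4$ and the factors $\mu+j+\frac{1}{2}=\frac{k+2j}{2}$), rather than descending as in the identity you quoted.
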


\begin{proof}
By absolute convergence, we may differentiate the seed directly. We calculate that
\begin{align*}
L_0^{\frac{\ell}{2}+1} \left(M_{0,\frac{k}{2}-\frac{1}{2}}(4 \pi \vt{m} y)\mathrm{e}^{-2 \pi i m x}\right) = \prod_{j=0}^{\frac{\ell}{2}} (k+2j) \left(\frac{y}{2}\right)^{\frac{\ell}{2}+1} M_{\frac{\ell}{2}+1,\frac{k}{2}-\frac{1}{2}}(4 \pi \vt{m} y)\mathrm{e}^{-2 \pi i m x}
\end{align*}
for every $\ell \in 2\N_0$. We compare this with the definition of the seed $\varphi_{\kappa,m}$, and choose $\ell = k-4$. This yields the claim.
\end{proof}

The second step is to connect this result to the Fourier expansion of $\Ec_{k,D}(\tau,0)$. Thus, we need an identity involving (iterated) Maa{\ss} operators and cycle integrals. This was performed by Alfes-Neumann, Schwagenscheidt \cite{alneschw}, generalizing earlier results of Bringmann, Guerzhoy, Kane \cites{brguka14, brguka15}. To simplify the notation, we drop the weights of the cycle integrals temporarily.
\begin{lemma}[\protect{\cite{alneschw}*{Theorem 1.1}}] \label{lem:cycid}
Let $h \colon \H \to \C$ be a smooth function, which transforms like a modular form of weight $2-2\kappa \in 2\Z$ for $\Gamma$. Then we have the identity
\begin{align*}
\Cc(L_{2-2\kappa}h,Q) = \Cc(R_{2-2\kappa}h,Q) = \overline{\Cc(\xi_{2-2\kappa}h,Q)}.
\end{align*}
Moreover, if $h$ is a weak Maa{\ss} form of weight $2-2\kappa$ with eigenvalue $\lambda$, then we have
\begin{align}
\Cc\left(R_{2-2\kappa}^{\kappa-\ell}h,Q\right) &= \left((\kappa+\ell)(\kappa-\ell-1)-\lambda\right)\Cc\left(R_{2-2\kappa}^{\kappa-\ell-2}h,Q\right), \text{ if } \ell \leq \kappa-2, \label{eq:cycraise} \\
\Cc\left(L_{2-2\kappa}^{-\kappa-\ell+2}h,Q\right) &= \left((\kappa+\ell)(\kappa-\ell-1)-\lambda\right)\Cc\left(L_{2-2\kappa}^{-\kappa-\ell}h,Q\right), \text{ if } \ell \leq -\kappa. \label{eq:cyclow}
\end{align}
\end{lemma}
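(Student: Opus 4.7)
This is the statement of Theorem 1.1 of Alfes-Neumann and Schwagenscheidt, so I outline the route that their proof follows. The plan is to first establish the single-operator identity $\Cc(L_{2-2\kappa}h,Q) = \Cc(R_{2-2\kappa}h,Q) = \overline{\Cc(\xi_{2-2\kappa}h,Q)}$ by direct computation on $S_Q$, and then bootstrap this base case into the recursions \eqref{eq:cycraise}, \eqref{eq:cyclow} by combining it with the factorizations $L_{k+2}R_k = -(\Delta_k + k)$ and $R_{k-2}L_k = -\Delta_k$ of the hyperbolic Laplacian.

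The base identity rests on two geometric facts on the geodesic $S_Q$. The paper already records $\vt{Q(z,1)} = \Dc(Q)^{1/2} v$ for $z \in S_Q$; a brief calculation with $z = u+iv$ and the equation $a\vt{z}^2 + bu + c = 0$ sharpens this to the branch identity $Q(z,1) = iv(2az+b)$ on $S_Q$ (with a sign fixed by the orientation) and to the tangential relation $d\bar z = -\Dc(Q)(2az+b)^{-2}dz$ along the curve. To prove $\Cc(L_{k}h,Q) = \overline{\Cc(\xi_{k}h,Q)}$ with $k = 2-2\kappa$, I would take the complex conjugate of $\Cc(\xi_{k}h,Q)$, translate the resulting integrand with the help of $\xi_{k} h = 2iv^{k}\overline{h_{\bar\tau}}$, $\overline{Q(z,1)} = \Dc(Q) v^2/Q(z,1)$, and the tangential relation, and verify that what remains matches the cycle integral of $L_{k} h = -2iv^2 h_{\bar\tau}$ at the appropriate weight. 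To prove $\Cc(L_{k}h,Q) = \Cc(R_{k}h,Q)$, I would apply Stokes' theorem to the exact $1$-form
\[
d\bigl(h\, Q(z,1)^{k/2}\bigr) = \bigl(h_\tau\, Q(z,1)^{k/2} + \tfrac{k}{2} h\, Q(z,1)^{k/2-1}(2az+b)\bigr) dz + h_{\bar\tau}\, Q(z,1)^{k/2}\, d\bar z
\]
on the closed cycle $\Gamma_Q\backslash S_Q$; the substitution $2az+b = -iQ(z,1)/v$ in the second summand of the $dz$-term produces precisely $-\tfrac{i}{2} R_{k}h\cdot Q(z,1)^{k/2}\,dz$, and the $d\bar z$-term collapses via the tangential relation into a multiple of $L_{k}h\cdot Q(z,1)^{k/2-2}\,dz$. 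Setting $\oint = 0$ and reinstating the $\Dc(Q)$ normalization then yields the equality of cycle integrals.

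For the recursions, the base identity lets me trade an outer raising operator for an outer lowering operator, and then the Laplacian factorization reduces the order by two. Concretely, for \eqref{eq:cycraise} I would write
\[
\Cc\bigl(R^{\kappa-\ell}_{2-2\kappa}h, Q\bigr) = \Cc\bigl(R\cdot R^{\kappa-\ell-1}_{2-2\kappa}h, Q\bigr) = \Cc\bigl(L\cdot R^{\kappa-\ell-1}_{2-2\kappa}h, Q\bigr) = \Cc\bigl(LR\cdot R^{\kappa-\ell-2}_{2-2\kappa}h, Q\bigr),
\]
observe that $R^{\kappa-\ell-2}_{2-2\kappa}h$ has weight $-2\ell-2$, and apply $LR = -(\Delta_{-2\ell-2} + (-2\ell-2))$. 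Iterating the eigenvalue shift $\Delta_{k+2}(R_{k}g) = (\mu+k)(R_{k}g)$ starting from $\Delta_{2-2\kappa}h = \lambda h$ gives the eigenvalue of $R^{\kappa-\ell-2}_{2-2\kappa}h$ as $\lambda - (\kappa-\ell-2)(\kappa+\ell+1)$, and a short expansion collapses the scalar into the advertised factor $(\kappa+\ell)(\kappa-\ell-1) - \lambda$. The lowering recursion \eqref{eq:cyclow} proceeds symmetrically via $RL = -\Delta$ and the dual shift $\Delta_{k-2}(L_{k}g) = (\mu-k+2)(L_{k}g)$.

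The main technical obstacle is the bookkeeping in the base identity: pinning down the correct sign in $Q(z,1) = iv(2az+b)$ (which is tied to the orientation of $\Gamma_Q\backslash S_Q$ with respect to $\sgn(Q)$) and reconciling the powers of $\Dc(Q)$ hidden in the normalization $\Dc(Q)^{1/2-k/4}$ of $\Cc_{k}$ across the three differing target weights $-2\kappa$, $4-2\kappa$, and $2\kappa$. Once the base identity is secured, the recursions follow mechanically from the Laplacian factorizations and the raising/lowering calculus.
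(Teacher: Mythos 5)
The paper imports this lemma verbatim from Alfes-Neumann and Schwagenscheidt without reproving it, and your reconstruction follows essentially the route of that reference: the single-operator identity via the geodesic relations $Q(z,1)=iv(2az+b)$, $\overline{Q(z,1)}=\Dc(Q)v^2/Q(z,1)$, $d\bar z=-\Dc(Q)(2az+b)^{-2}dz$ combined with Stokes' theorem for the $\Gamma_Q$-invariant function $h\,Q(z,1)^{k/2}$ on the closed cycle, and the recursions by trading the outermost raising (resp.\ lowering) operator for its counterpart via the base identity and then applying $L_{k+2}R_k=-\Delta_k-k$ and $R_{k-2}L_k=-\Delta_k$ to the inner iterate. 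The scalar bookkeeping checks out: the eigenvalue $\lambda-(\kappa-\ell-2)(\kappa+\ell+1)$ of $R^{\kappa-\ell-2}_{2-2\kappa}h$ under $\Delta_{-2\ell-2}$ does collapse, after adding the weight term $2\ell+2$, to the advertised factor $(\kappa+\ell)(\kappa-\ell-1)-\lambda$, and likewise for the lowering case.
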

Note that the conditions on $\ell$ in \eqref{eq:cycraise}, \eqref{eq:cyclow} include the cases $R_{2-2\kappa}^0$, $L_{2-2\kappa}^0$. Thus, we may insert a suitable chain of raising or lowering operators in our cycle integrals and compensate for that by factors in $\kappa$, $\ell$ from equations \eqref{eq:cycraise}, \eqref{eq:cyclow}.

The third step is to utilize an identity due to Bringmann, Kane \cite{brika20}.
\begin{lemma}[\protect{\cite{brika20}*{eq.\ (3.10), (3.11)}}] \label{lem:brikaid}
We have
\begin{align*}
\sum_{m \geq 1} \Phi_{2-k,-m}(w)q^m = \frac{i}{2\pi} (2i)^{k-1} \Pb_k(\tau,w),
\end{align*}
whenever
\begin{align*}
\im(\tau) > \max\left(\im(w), \frac{1}{\im(w)}\right).
\end{align*}
\end{lemma}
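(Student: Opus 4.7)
The strategy is to compute the Fourier expansion of $\tau \mapsto \Pb_k(\tau, w)$ in the region $\im \tau > \max(\im w, 1/\im w)$ — where $\tau$ lies above the entire $\Gamma$-orbit of $w$, so $\Pb_k(\tau, w)$ is holomorphic in $\tau$ — and to match the $q^m$-coefficient against $\frac{2\pi}{i(2i)^{k-1}}\Phi_{2-k,-m}(w)$, which is precisely the claimed identity after inversion. Using the second representation in Definition~\ref{def:poincare}(iv) (slash in $w$),
\begin{align*}
\Pb_k(\tau, w) = \im(w)^{k-1} \sum_{\gamma \in \Gamma} \frac{(c_\gamma w + d_\gamma)^{k-2}}{(\tau - \gamma w)(\tau - \overline{\gamma w})^{k-1}},
\end{align*}
I split $\Gamma = \bigsqcup_{\gamma \in \Gamma_\infty \backslash \Gamma} \Gamma_\infty \gamma$. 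Since $\Gamma_\infty$-translates send $\gamma w \mapsto \gamma w + n$ while fixing $c_\gamma, d_\gamma$, each coset contributes $(c_\gamma w + d_\gamma)^{k-2}\sum_{n \in \Z}\bigl[(\tau - \gamma w - n)(\tau - \overline{\gamma w} - n)^{k-1}\bigr]^{-1}$.

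Next, I partial-fraction each summand as $(2i\,\im(\gamma w))^{1-k}(\tau - \gamma w - n)^{-1} + \sum_{j=1}^{k-1} B_j(\gamma w)(\tau - \overline{\gamma w} - n)^{-j}$ (with $B_j$ read off from the Taylor expansion of $(z - \gamma w)^{-1}$ around $z = \overline{\gamma w}$) and apply Lipschitz's summation formula to each piece. For $j \geq 2$ one has $\sum_n (z + n)^{-j} = \frac{(-2\pi i)^j}{(j-1)!}\sum_{m \geq 1} m^{j-1}e^{2\pi i m z}$, which applies at $z = \tau - \overline{\gamma w}$ automatically. The simple-pole piece uses $-\pi\cot(\pi z) = \pi i + 2\pi i \sum_{m \geq 1} e^{2\pi i m z}$ for $\im z > 0$, i.e.\ $\im \tau > \im(\gamma w)$ uniformly in $\gamma$; since $\sup_\gamma \im(\gamma w) = \max(\im w, 1/\im w)$, the hypothesis of the lemma is exactly what is needed. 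Collecting by powers of $q$, the $q^m$-coefficient ($m \geq 1$) of the $\gamma$-summand becomes an explicit linear combination of $e^{-2\pi i m \gamma w}$ and of $m^{j-1} e^{-2\pi i m \overline{\gamma w}}$ ($j = 1, \ldots, k-1$), with coefficients that are simple rational expressions in $\im(\gamma w)$.

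The crux is to identify this combination with an explicit multiple of $(c_\gamma w + d_\gamma)^{k-2}\varphi_{2-k,-m}(\gamma w)$. With $y = 4\pi m\,\im(\gamma w)$, Kummer's representation $M_{(2-k)/2,(k-1)/2}(y) = e^{-y/2}y^{k/2}\,{}_1F_1(k-1, k, y)$, Kummer's transformation, and the closed form of ${}_1F_1(1, k, z)$ combine to yield
\begin{align*}
y^{(k-2)/2} M_{(2-k)/2,(k-1)/2}(y) = (-1)^{k-1}(k-1)!\,e^{-y/2} + (-1)^k (k-1)!\,e^{y/2}\sum_{j=0}^{k-2}\frac{(-y)^j}{j!},
\end{align*}
so $\varphi_{2-k,-m}(\gamma w)$ decomposes into a decaying $e^{-2\pi m\,\im(\gamma w)}$-term and a growing polynomial $e^{2\pi m\,\im(\gamma w)}$-term; these mirror the two-term shape of the Lipschitz output, with the decaying part pairing with the higher-pole contributions and the growing polynomial pairing with the simple-pole contribution. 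A match of leading coefficients in either piece then pins the global constant down to $\frac{2\pi}{i(2i)^{k-1}}$. The $\pi i$ contribution from $-\pi\cot$ at $m = 0$, summed over $\Gamma_\infty \backslash \Gamma$, must vanish; this reflects the decay of $\Pb_k(\tau, w)$ as $\tau \to i\infty$ recorded in Lemma~\ref{lem:poincare}(iv). Refolding $\sum_{\gamma \in \Gamma_\infty \backslash \Gamma}(c_\gamma w + d_\gamma)^{k-2}\varphi_{2-k,-m}(\gamma w) = \Phi_{2-k,-m}(w)$ and interchanging the $\gamma$- and $m$-sums (absolute convergence in the stated region) then produces the desired identity. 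The main obstacle I anticipate is this Whittaker identification: matching the explicit Lipschitz output against the Kummer-transformed shape of $M_{(2-k)/2,(k-1)/2}$ and verifying the normalization is tedious but mechanical, whereas the unfolding, Lipschitz summation, and refolding are standard Poincaré-series templates.
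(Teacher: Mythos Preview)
The paper does not prove this lemma at all; it is stated with a citation to Bringmann--Kane \cite{brika20}*{eq.\ (3.10), (3.11)} and used as a black box in the proof of Proposition~\ref{prop:cycpoin}. So there is no ``paper's own proof'' to compare against beyond that reference.

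Your sketch is the standard way such generating-series identities for Poincar\'e series are established: unfold over $\Gamma_\infty\backslash\Gamma$, apply Lipschitz summation to the $\Gamma_\infty$-sum after a partial-fraction decomposition, and then recognize the resulting expression in $\gamma w$ as (a multiple of) the Whittaker seed $\varphi_{2-k,-m}$ via the Kummer representation of $M_{\mu,\nu}$. This is essentially the argument in \cite{brika20}, so in that sense you are reproducing the cited proof rather than giving an alternative. Your Kummer computation for $y^{(k-2)/2}M_{(2-k)/2,(k-1)/2}(y)$ is correct, and identifying the decaying and growing pieces with the simple-pole and higher-pole Lipschitz contributions is the right bookkeeping.

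One small correction: the claim $\sup_{\gamma\in\Gamma}\im(\gamma w)=\max(\im w,1/\im w)$ is not an equality in general, only the inequality $\sup_{\gamma}\im(\gamma w)\le\max(\im w,1/\im w)$ (coming from $\vt{cw+d}^2\ge c^2\im(w)^2$ for $c\neq 0$). This inequality is exactly what you need to justify the uniform applicability of the Lipschitz formula, so the argument is unaffected. The vanishing of the $m=0$ contribution that you flag is indeed equivalent to the cuspidality of $\Pb_k(\cdot,w)$ at $i\infty$ from Lemma~\ref{lem:poincare}(iv), so that step is fine too.
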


Now, we are in position to prove Proposition \ref{prop:cycpoin}.
\begin{proof}[Proof of Proposition \ref{prop:cycpoin}]
Since $\tau$ is assumed to be located above the net of geodesics, the assumption from Lemma \ref{lem:brikaid} is satisfied for every $w \in \Nc(D)$. ($\im(w)$ is bounded from below and above.) In addition, we have no poles of $\Pb_k$ for such $\tau$ and $w$.

We invoke Lemma \ref{lem:cycid}, and employ equation \eqref{eq:cyclow} backwards and iteratively to the integrand
\begin{align*}
f(w,\tau) = \sum_{m \geq 1} m^{\frac{k}{2}-1} G_{-m}\left(w,\frac{k}{2}\right) q^m,
\end{align*}
from the Fourier expansion of $\Ec_{k,D}$. Here, we keep $\tau$ fixed, and take $\kappa = 1$, $\lambda = \frac{k}{2}\left(1-\frac{k}{2}\right)$, and $\ell = -1,-3,\ldots,-\frac{k}{2}+2$ using that $k \equiv 2 \pmod*{4}$. This produces the constant
\begin{align*}
C_2(k) \coloneqq \prod_{\substack{\ell = -\frac{k}{2}+2 \\ \ell \text{ odd}}}^{-1} \frac{1}{(1+\ell)(-\ell)-\frac{k}{2}\left(1-\frac{k}{2}\right)}.
\end{align*}
To indicate the steps, we keep the constants until the last equation. Combining, we have
\begin{align*}
&\Ec_{k,D}(\tau, 0) = \frac{(-1)^{\frac{k}{2}}2 \pi^{\frac{k}{2}}}{D^{\frac{k}{4}}\Gamma\left(\frac{k}{4}\right)^2} \sum_{Q \in \Qc(D)\slash\Gamma} \chi_d(Q) \Cc_0\left(L_{0,\cdot}^0f(\cdot,\tau),Q\right) \\
&= \frac{(-1)^{\frac{k}{2}}2 \pi^{\frac{k}{2}} C_2(k)}{D^{\frac{k}{4}}\Gamma\left(\frac{k}{4}\right)^2} \sum_{Q \in \Qc(D)\slash\Gamma} \chi_d(Q) \Cc_{2-k}\left(L_{0,\cdot}^{\frac{k}{2}-1}f(\cdot,\tau),Q\right) \\
&= \frac{(-1)^{\frac{k}{2}}2 \pi^{\frac{k}{2}} C_2(k)}{D^{\frac{k}{4}}\Gamma\left(\frac{k}{4}\right)^2} \frac{C_1(k)\Gamma(k)}{(8\pi)^{\frac{k}{2}-1}} \sum_{Q \in \Qc(D)\slash\Gamma} \chi_d(Q) \Cc_{2-k}\left(\sum_{m \geq 1} \Phi_{2-k,-m}(\cdot)q^m,Q\right) \\
&= \frac{(-1)^{\frac{k}{2}}2 \pi^{\frac{k}{2}} C_2(k)}{D^{\frac{k}{4}}\Gamma\left(\frac{k}{4}\right)^2} \frac{C_1(k)\Gamma(k)}{(8\pi)^{\frac{k}{2}-1}} \frac{i}{2\pi} (2i)^{k-1} \sum_{Q \in \Qc(D)\slash\Gamma} \chi_d(Q) \Cc_{2-k}\left(\Pb_k(\tau,\cdot),Q\right)
\end{align*}
The constant in front of the final sum simplifies to
\begin{align}
C(k) \coloneqq \frac{(-1)^k \Gamma(k)}{2^{\frac{k}{2}-2} D^{\frac{k}{4}}\Gamma\left(\frac{k}{4}\right)^2} C_1(k)C_2(k). \label{eq:finalconstant}
\end{align}
This establishes the Propostition.
\end{proof}

We conclude this section and the paper with the proof of Theorem \ref{thm:main}.
\begin{proof}[Proof of Theorem \ref{thm:main}]
\
\begin{enumerate}[label=(\roman*), leftmargin=*, labelindent=0pt]
\item The case $k=2$ was shown in \cite{hypeis1} in the unbounded component of $\H \setminus \Nc(D)$ for $\Ec_{2,D}(\tau,0)$. Since $\Ecwh_{k,D}(\tau,0) = \Ec_{k,D}(\tau,0)$ in the unbounded component by definition of $\Id_Q$, the result of \cite{hypeis1} extends to $\Ecwh_{k,D}(\tau,0)$ in the unbounded component of $\H \setminus \Nc(D)$ directly. Now, we can use modularity of $\Ecwh_{k,D}(\tau,0)$ to obtain the claim in the other connected components of $\H \setminus \Nc(D)$.
\item Suppose that $2 < k \equiv 2 \pmod{4}$. Modularity follows by Lemma \ref{lem:signs} (i). By Lemma \ref{lem:signs} (iii), $\Ecwh_{k,D}(\tau,0)$ is holomorphic outside $\Nc(D)$. The limit condition on $\Nc(D)$ can be verified by adapting the proof of \cite{brikako}*{Proposition $5.2$} straightforwardly. The vanishing at $i\infty$ either follows by $\sgn(Q_{\tau}) = 1$ in the unbounded component and cuspidality of $f_{k,D}$, or by the Fourier expansions of $\Ec_{k,D}(\tau,0)$ and $\Ecwt_{k,D}(\tau,0)$.
\item We prove the explicit representation of $\Ecwh_{k,D}(\tau,0)$ outside $\Nc(D)$. If $\tau$ is located above the net of geodesics $\Nc(D)$, we have $\Ecwh_{k,D}(\tau,0) = \Ec_{k,D}(\tau,0)$. We apply Propostion \ref{prop:cycpoin}, and obtain the claimed representation of $\Ecwh_{k,D}$ above the net of geodesics. Finally, the representation extends to every connected component of $\H \setminus \Nc(D)$ by virtue of weight $k$ modularity of both sides of the claimed identity.
\end{enumerate}
\end{proof}

\begin{bibsection}
\begin{biblist}

\bib{alneschw}{article}{
   author={Alfes-Neumann, C.},
   author={Schwagenscheidt, M.},
   title={Identities of cycle integrals of weak Maass forms},
   journal={Ramanujan J.},
   volume={52},
   date={2020},
   number={3},
   pages={683--688},
}

\bib{askani}{article}{
   author={Asai, T.},
   author={Kaneko, M.},
   author={Ninomiya, H.},
   title={Zeros of certain modular functions and an application},
   journal={Comment. Math. Univ. St. Paul.},
   volume={46},
   date={1997},
   number={1},
   pages={93--101},
}

\bib{beng13}{thesis}{
   author={Bengoechea, P.},
   title={Corps quadratiques et formes modulaires},
   type={Ph.D. Thesis},
   organization={Universit{\'e} Pierre et Marie Curie},
   date={2013},
}

\bib{thebook}{book}{
    AUTHOR = {Bringmann, K.},
    AUTHOR = {Folsom, A.},
    AUTHOR = {Ono, K.},
    AUTHOR = {Rolen, L.},
     TITLE = {Harmonic {M}aass forms and mock modular forms: theory and
              applications},
    SERIES = {American Mathematical Society Colloquium Publications},
    VOLUME = {64},
 PUBLISHER = {American Mathematical Society, Providence, RI},
      YEAR = {2017},
     PAGES = {xv+391},
}

\bib{brika16}{article}{
   author={Bringmann, K.},
   author={Kane, B.},
   title={A problem of Petersson about weight 0 meromorphic modular forms},
   journal={Res. Math. Sci.},
   volume={3},
   date={2016},
   pages={Paper No. 24, 31},
}

\bib{brika20}{article}{
	author={Bringmann, K.},
	author={Kane, B.},
	title={Ramanujan-like formulas for Fourier coefficients of all
		meromorphic cusp forms},
	journal={Adv. Math.},
	volume={373},
	date={2020},
	pages={107308, 37},
}

\bib{brguka14}{article}{
   author={Bringmann, K.},
   author={Guerzhoy, P.},
   author={Kane, B.},
   title={Shintani lifts and fractional derivatives for harmonic weak Maass
   forms},
   journal={Adv. Math.},
   volume={255},
   date={2014},
   pages={641--671},
}

\bib{brguka15}{article}{
   author={Bringmann, K.},
   author={Guerzhoy, P.},
   author={Kane, B.},
   title={On cycle integrals of weakly holomorphic modular forms},
   journal={Math. Proc. Cambridge Philos. Soc.},
   volume={158},
   date={2015},
   number={3},
   pages={439--449},
}

\bib{brikako}{article}{
   author={Bringmann, K.},
   author={Kane, B.},
   author={Kohnen, W.},
   title={Locally harmonic Maass forms and the kernel of the Shintani lift},
   journal={Int. Math. Res. Not. IMRN},
   date={2015},
   number={11},
   pages={3185--3224},
}

\bib{brikaloeonro}{article}{
   author={Bringmann, K.},
   author={Kane, B.},
   author={L\"{o}brich, S.},
   author={Ono, K.},
   author={Rolen, L.},
   title={On divisors of modular forms},
   journal={Adv. Math.},
   volume={329},
   date={2018},
   pages={541--554},
}

\bib{brimo}{webpage}{
	title={A modular framework of functions of Knopp and indefinite binary quadratic forms},
	author={Bringmann, K.},
	author={Mono, A.},
	year={2022},
	url={https://arxiv.org/abs/2208.01451},
}

\bib{bron07}{article}{
   author={Bringmann, K.},
   author={Ono, K.},
   title={Arithmetic properties of coefficients of half-integral weight
   Maass-Poincar\'{e} series},
   journal={Math. Ann.},
   volume={337},
   date={2007},
   number={3},
   pages={591--612},
}

\bib{brufu}{article}{
   author={Bruinier, J. H.},
   author={Funke, J.},
   title={On two geometric theta lifts},
   journal={Duke Math. J.},
   volume={125},
   date={2004},
   number={1},
   pages={45--90},
}

\bib{the123}{collection}{
   author={Bruinier, J. H.},
   author={van der Geer, G.},
   author={Harder, G.},
   author={Zagier, D.},
   title={The 1-2-3 of modular forms},
   series={Universitext},
   note={Lectures from the Summer School on Modular Forms and their
   Applications held in Nordfjordeid, June 2004;
   Edited by Kristian Ranestad},
   publisher={Springer-Verlag, Berlin},
   date={2008},
}

\bib{duimto10}{article}{
   author={Duke, W.},
   author={Imamo\={g}lu, \"{O}.},
   author={T\'{o}th, \'{A}.},
   title={Rational period functions and cycle integrals},
   journal={Abh. Math. Semin. Univ. Hambg.},
   volume={80},
   date={2010},
   number={2},
   pages={255--264},
}

\bib{duimto11}{article}{
   author={Duke, W.},
   author={Imamo\={g}lu, \"{O}.},
   author={T\'{o}th, \'{A}.},
   title={Cycle integrals of the $j$-function and mock modular forms},
   journal={Ann. of Math. (2)},
   volume={173},
   date={2011},
   number={2},
   pages={947--981},
}

\bib{ehgukaro}{article}{
   author={Ehlen, S.},
   author={Guerzhoy, P.},
   author={Kane, B.},
   author={Rolen, L.},
   title={Central $L$-values of elliptic curves and local polynomials},
   journal={Proc. Lond. Math. Soc. (3)},
   volume={120},
   date={2020},
   number={5},
   pages={742--769},
}

\bib{fay77}{article}{
   author={Fay, John D.},
   title={Fourier coefficients of the resolvent for a Fuchsian group},
   journal={J. Reine Angew. Math.},
   volume={293(294)},
   date={1977},
   pages={143--203},
}

\bib{goldfield}{article}{
   author={Goldfeld, D.},
   title={Analytic and arithmetic theory of Poincar\'{e} series},
   conference={
      title={Journ\'{e}es Arithm\'{e}tiques de Luminy},
      address={Colloq. Internat. CNRS, Centre Univ. Luminy, Luminy},
      date={1978},
   },
   book={
      series={Ast\'{e}risque},
      volume={61},
      publisher={Soc. Math. France, Paris},
   },
   date={1979},
   pages={95--107},
}

\bib{grokoza}{article}{
   author={Gross, B.},
   author={Kohnen, W.},
   author={Zagier, D.},
   title={Heegner points and derivatives of $L$-series. II},
   journal={Math. Ann.},
   volume={278},
   date={1987},
   number={1-4},
   pages={497--562},
}

\bib{hoevel}{thesis}{
   author={Hövel, M.},
   title={Automorphe Formen mit Singularitäten auf dem hyperbolischen Raum},
   type={Ph.D. Thesis},
   organization={TU Darmstadt},
   date={2012},
}

\bib{jokrvp10}{article}{
   author={Jorgenson, J.},
   author={Kramer, J.},
   author={von Pippich, A.-M.},
   title={On the spectral expansion of hyperbolic Eisenstein series},
   journal={Math. Ann.},
   volume={346},
   date={2010},
   number={4},
   pages={931--947},
}

\bib{knopp}{article}{
	author={Knopp, M.},
	title={Modular integrals and their Mellin transforms},
	conference={
		title={Analytic number theory},
		address={Allerton Park, IL},
		date={1989},
	},
	book={
		series={Progr. Math.},
		volume={85},
		publisher={Birkh\"{a}user Boston, Boston, MA},
	},
	date={1990},
	pages={327--342},
}

\bib{koh}{article}{
   author={Kohnen, W.},
   title={Fourier coefficients of modular forms of half-integral weight},
   journal={Math. Ann.},
   volume={271},
   date={1985},
   number={2},
   pages={237--268},
}

\bib{loeschw}{article}{
   author={Löbrich, S.},
   author={Schwagenscheidt, M.},
   title={Meromorphic Modular Forms with Rational Cycle Integrals},
   journal={International Mathematics Research Notices},
   publisher={Oxford University Press (OUP)},
   year={2020},
}

\bib{maass49}{article}{
   author={Maa{\ss}, H.},
   title={\"{U}ber eine neue Art von nichtanalytischen automorphen Funktionen
   und die Bestimmung Dirichletscher Reihen durch Funktionalgleichungen},
   language={German},
   journal={Math. Ann.},
   volume={121},
   date={1949},
   pages={141--183},
}

\bib{maass52}{article}{
   author={Maa{\ss}, H.},
   title={Die Differentialgleichungen in der Theorie der elliptischen
   Modulfunktionen},
   language={German},
   journal={Math. Ann.},
   volume={125},
   date={1952},
   pages={235--263 (1953)},
}

\bib{matsu1}{webpage}{
 	title={Trinity of the Eisenstein series},
    author={Matsusaka, T.},
    year={2020},
    url={https://arxiv.org/abs/2003.12354v1},
}

\bib{matsu2}{webpage}{
	title={A Hyperbolic Analogue of the Rademacher Symbol},
	author={Matsusaka, T.},
	year={2020},
	url={https://arxiv.org/abs/2003.12354v2},
}

\bib{hypeis1}{article}{
   author={Mono, A.},
   title={Eisenstein series of even weight $k\geq2$ and integral binary
   quadratic forms},
   journal={Proc. Amer. Math. Soc.},
   volume={150},
   date={2022},
   number={5},
   pages={1889--1902},
}

\bib{neu73}{article}{
   author={Neunh\"{o}ffer, H.},
   title={\"{U}ber die analytische Fortsetzung von Poincar\'{e}reihen},
   language={German},
   journal={S.-B. Heidelberger Akad. Wiss. Math.-Natur. Kl.},
   date={1973},
   pages={33--90},
}

\bib{nie73}{article}{
   author={Niebur, D.},
   title={A class of nonanalytic automorphic functions},
   journal={Nagoya Math. J.},
   volume={52},
   date={1973},
   pages={133--145},
}

\bib{parson}{article}{
   author={Parson, L. A.},
   title={Modular integrals and indefinite binary quadratic forms},
   conference={
   title={A tribute to Emil Grosswald: number theory and related
      analysis},
   },
   book={
   series={Contemp. Math.},
   volume={143},
   publisher={Amer. Math. Soc., Providence, RI},
   },
   date={1993},
   pages={513--523},
}

\bib{petersson50}{article}{
   author={Petersson, H.},
   title={Konstruktion der Modulformen und der zu gewissen Grenzkreisgruppen
   geh\"{o}rigen automorphen Formen von positiver reeller Dimension und die
   vollst\"{a}ndige Bestimmung ihrer Fourierkoeffizienten},
   language={German},
   journal={S.-B. Heidelberger Akad. Wiss. Math.-Nat. Kl.},
   volume={1950},
   date={1950},
   pages={417--494},
}

\bib{pet48}{article}{
   author={Petersson, H.},
   title={\"{U}ber den Bereich absoluter Konvergenz der Poincar\'{e}schen Reihen},
   language={German},
   journal={Acta Math.},
   volume={80},
   date={1948},
   pages={23--63},
}

\bib{pi16}{webpage}{
 	title={A Kronecker limit type formula for elliptic Eisenstein series},
    author={von Pippich, A.-M.},
    year={2016},
    url={https://arxiv.org/abs/1604.00811v1},
}

\bib{pischvoe17}{article}{
   author={von Pippich, A.-M.},
   author={Schwagenscheidt, M.},
   author={V\"{o}lz, F.},
   title={Kronecker limit formulas for parabolic, hyperbolic and elliptic
   Eisenstein series via Borcherds products},
   journal={J. Number Theory},
   volume={225},
   date={2021},
   pages={18--58},
}

\bib{roe67}{article}{
   author={Roelcke, W.},
   title={Das Eigenwertproblem der automorphen Formen in der hyperbolischen
   Ebene, II},
   language={German},
   journal={Math. Ann.},
   volume={168},
   date={1967},
   pages={261--324},
}

\bib{schw18}{thesis}{
   author={Schwagenscheidt, M.},
   title={Regularized Theta Lifts of Harmonic Maass Forms},
   type={Ph.D. Thesis},
   organization={TU Darmstadt},
   date={2018},
}

\bib{sel56}{article}{
   author={Selberg, A.},
   title={Harmonic analysis and discontinuous groups in weakly symmetric
   Riemannian spaces with applications to Dirichlet series},
   journal={J. Indian Math. Soc. (N.S.)},
   volume={20},
   date={1956},
   pages={47--87},
}

\bib{voelz18}{thesis}{
   author={V\"olz, F.},
   title={Realizing Hyperbolic and Elliptic Eisenstein Series as Regularized Theta Lifts},
   type={Ph.D. Thesis},
   organization={TU Darmstadt},
   date={2018},
}

\bib{zagier75}{article}{
	author={Zagier, D.},
	title={Modular forms associated to real quadratic fields},
	journal={Invent. Math.},
	volume={30},
	date={1975},
	number={1},
	pages={1--46},
}

\bib{zagier81}{book}{
   author={Zagier, D.},
   title={Zetafunktionen und quadratische K\"{o}rper},
   language={German},
   series={Hochschultext [University Textbooks]},
   note={Eine Einf\"{u}hrung in die h\"{o}here Zahlentheorie. [An introduction to
   higher number theory]},
   publisher={Springer-Verlag, Berlin-New York},
   date={1981},
}

\bib{zagier10}{article}{
   author={Zagier, D.},
   title={Quantum modular forms},
   conference={
      title={Quanta of maths},
   },
   book={
      series={Clay Math. Proc.},
      volume={11},
      publisher={Amer. Math. Soc., Providence, RI},
   },
   date={2010},
   pages={659--675},
}

\bib{zagier99}{article}{
   author={Zagier, D.},
   title={From quadratic functions to modular functions},
   conference={
      title={Number theory in progress, Vol. 2},
      address={Zakopane-Ko\'{s}cielisko},
      date={1997},
   },
   book={
      publisher={de Gruyter, Berlin},
   },
   date={1999},
   pages={1147--1178},
}
\end{biblist}
\end{bibsection}

\end{document}